\newcommand{\R}{\mathds R}
\newcommand{\Pp}{\mathds P}
\newcommand{\Ee}{\mathds E}
\newcommand{\I}{\mathds 1}
\newcommand{\sA}{\mathcal A}
\newcommand{\EE}{\mathcal E}
\newcommand{\FF}{\mathcal F}
\newcommand{\LL}{\mathcal L}
\newcommand{\wt}{\widetilde}
\newcommand{\wh}{\widehat}
\newtheorem{theorem}{Theorem}[section]
\newtheorem{lemma}[theorem]{Lemma}
\newtheorem{proposition}[theorem]{Proposition}
\theoremstyle{definition}
\newtheorem{example}[theorem]{Example}
\newtheorem*{acknowledgement}{Acknowledgement}
\numberwithin{equation}{section}
\begin{document}

\title{\bf Ergodicity for  Time Changed Symmetric Stable Processes  }

\author{{\bf Zhen-Qing Chen}\thanks{Research supported in part
by NSF Grants DMS-1206276 and  NNSFC Grant 11128101.
 } \quad and \quad {\bf Jian Wang}\thanks{Research supported in part
 by  National Natural Science Foundation of
China (No.\ 11201073), the Program for New Century Excellent
Talents in Universities of Fujian (No.\ JA12053) and the Program for Nonlinear Analysis and Its Applications (No.\ IRTL1206).}}

\date{}

\maketitle

\begin{abstract} In this paper we  study the ergodicity and the related
semigroup property for a class of symmetric Markov jump processes
associated with time changed symmetric $\alpha$-stable processes.
For this purpose, explicit and sharp criteria for Poincar\'{e} type
inequalities (including Poincar\'{e}, super Poincar\'{e} and weak
Poincar\'{e} inequalities) of the corresponding non-local Dirichlet
forms are derived. Moreover, our main results, when applied to a
class
 of one-dimensional stochastic differential equations driven
 by symmetric $\alpha$-stable processes, yield sharp criteria for their various
  ergodic properties and corresponding functional inequalities.

\medskip

\noindent\textbf{Keywords:} symmetric stable processes, time-change, Poincar\'{e} type inequalities, non-local Dirichlet forms

\medskip
\noindent \textbf{MSC 2010:} 60J75, 60J25, 60J27
\end{abstract}

\section{Introduction and Main Results}\label{section1}

Given a conservative symmetric Markov process which is not ergodic
 such as Brownian motion or symmetric stable process on $\R^d$,
is it possible to turn it into an ergodic process by a time-change?
It is known that transience and recurrence are invariant under
time-change (see \cite[Theorem 5.2.5]{CF2}). Thus one can never turn
a transient process into a recurrent process. In fact, after a
time-change a transient conservative process may have finite
lifetime. In \cite{CF1}, extension of such a time changed process
has been investigated for transient reflected Brownian motion. In
this paper, we investigate various ergodic properties of recurrent
symmetric stable processes under suitable time-change.

For $\alpha\in(0,2)$, let $X$ be a rotationally symmetric
$\alpha$-stable process on $\R^d$. It is well known that its
infinitesimal generator is the fractional Laplacian
   $\Delta^{\alpha/2}:=-(-\Delta)^{\alpha/2}$ on $\R^d$,
   which enjoys the following expression
$$\Delta^{\alpha/2}u(x)=\int_{\R^d\setminus\{0\}}
\Big(u(x+z)-u(x)-\nabla u(x)\cdot z\I_{\{|z|\le
1\}}\Big)\,\frac{C_{d,\alpha}}{|z|^{d+\alpha}} dz.
$$
Here $C_{d,\alpha}=\frac{\alpha 2^{\alpha-1} \Gamma ((d+\alpha)/2)}
{\pi^{d/2} \Gamma (1-\alpha/2)}$ is the normalizing constant so that
the Fourier transform $\widehat {\Delta^{\alpha/2} u}(\xi)$ of
$\Delta^{\alpha/2} u$ is  $- |\xi|^\alpha \widehat u (\xi)$. Observe
that
\begin{equation*}
\lim_{\alpha \uparrow 2} \frac{C_{d, \alpha}}{2-\alpha}
 = \frac{d \, \Gamma (d/2)}{\pi^{d/2}}.
\end{equation*}
The Dirichlet form $(\EE, \FF)$ of $X$
on $L^2 (\R^d; dx)$ is given by
\begin{equation*}
\begin{split}
\EE (f, g)=&\, \frac12 \int_{\R^d \times \R^d} (f(x)-f(y))(g(x)-g(y)) \frac{C_{d, \alpha}}{|x-y|^{d+\alpha}} dx dy, \\
\FF =&\, \{ f\in L^2(\R^d; dx): \ \EE (f, f)<\infty \}.
\end{split}
\end{equation*}
Note that for $f\in C^2_c(\R^d)$,
$$ \EE (f, g) = - \int_{\R^d} g(x) \Delta^{\alpha/2} f (x) dx
\quad \hbox{for every } g\in \FF.
$$
It is known that $X$ is recurrent if and only if $\alpha \geq d$. In
other words, the symmetric $\alpha$-stable process $X$ is recurrent
if and only if $d=1$ and $\alpha \in [1, 2)$. In the latter case,
$X$ is recurrent with Lebesgue measure as its symmetrizing measure
but it does not have a stationary probability distribution. On the
other hand, we know that a time-change of $X$ does not change its
transience and recurrence (see \cite[Theorem 5.2.5]{CF2}) but will
change its symmetrizing measure. Let $a$ be a positive function on
$\R^d$ so that $1/a$ is $L^1(\R^d; dx)$ locally integrable. Let $\mu (dx)=
\frac{1}{a(x)} dx$ and $A_t:= \int_0^t 1/ a(X_s) ds$, which is a
positive continuous additive functional of $X$. Define
$$
\tau_t=\inf\{s>0: A_s>t\}
$$
and set $Y_t= X_{\tau_t}$. It is known (cf. \cite[Theorem
5.2.2]{CF2}) that $Y$ is a $\mu$-symmetric strong Markov process on
$\R^d$.

From now on, we assume $d=1$, $\alpha \in [1, 2)$ and $a$ is a positive
 and locally bounded measurable function on $\R$ so that $\mu$ defined above is a probability measure
 (that is, $\int_{\R}  a(x)^{-1} dx=1$).
As we noted above, the time changed process $Y$ is a pointwise
recurrent $\mu$-symmetric Markov process on $\R$ and so
every point in $\R$ has positive $\EE$-capacity.
 In particular, $\mu$ is a
reversible probability measure (and hence an invariant probability
measure) of $Y$. In fact, we have

\begin{proposition}\label{P:1.1}
 $\mu$ is the unique invariant measure  of $Y$
 and for every $x\in \R$ and $f\in C_b (\R)$,
 $$
 \lim_{t\to \infty} \frac1t \int_0^t \Ee_x \left[  f(Y_s) \right] ds
  = \int_{\R} f(x) \mu (dx).
 $$
\end{proposition}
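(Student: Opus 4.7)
The plan is to combine three properties of the time-changed process $Y$: (i) recurrence, preserved under time-change by \cite[Theorem 5.2.5]{CF2} and inherited from the recurrent stable process $X$; (ii) $\mu$-symmetry by \cite[Theorem 5.2.2]{CF2}, so the probability measure $\mu$ is reversible and in particular invariant; and (iii) irreducibility, because every singleton in $\R$ has positive $\EE$-capacity for $X$ (pointwise recurrence in dimension one when $\alpha \in [1,2)$), a property preserved under time-change.

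For uniqueness, I would argue that (i)--(iii) imply $\mu$ is an ergodic invariant measure: symmetry combined with irreducibility of the recurrent process $Y$ forces any bounded $P_t^Y$-invariant function to be $\mu$-a.e.\ constant. If $\nu$ is any other invariant probability measure, irreducibility gives $\nu \ll \mu$, and the density $d\nu/d\mu$ is then a bounded $P_t^Y$-invariant function, hence $\mu$-a.e.\ constant; thus $\nu = \mu$.

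For the ergodic averages, by Fubini $\frac{1}{t}\int_0^t \Ee_x[f(Y_s)]\,ds = \Ee_x\bigl[\frac{1}{t}\int_0^t f(Y_s)\,ds\bigr]$, so by bounded convergence it suffices to prove $\Pp_x$-a.s.\ convergence of $\frac{1}{t}\int_0^t f(Y_s)\,ds$ to $\int f\,d\mu$ for every $x \in \R$. Under $\Pp_\mu$ the process $Y$ is stationary and, by ergodicity of $\mu$, ergodic, so Birkhoff's ergodic theorem yields this convergence $\Pp_\mu$-a.s., and hence $\Pp_x$-a.s.\ for $\mu$-a.e.\ $x$.

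The main obstacle will be promoting the convergence to \emph{every} $x \in \R$ rather than only $\mu$-a.e. Here I would use pointwise recurrence once more: let $D$ be the full-$\mu$-measure set on which the $\Pp_y$-a.s.\ convergence holds, and consider the hitting time $\sigma_D := \inf\{t \ge 0 : Y_t \in D\}$. Since $Y$ is pointwise recurrent and $\mu(D^c)=0$, $\sigma_D$ is $\Pp_x$-a.s.\ finite for every $x$. Applying the strong Markov property at $\sigma_D$ and using the insensitivity of the Ces\`aro average to its initial finite segment then transfers the a.s.\ convergence from $\Pp_{Y_{\sigma_D}}$ to $\Pp_x$, yielding the stated convergence for every starting point.
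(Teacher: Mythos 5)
Your route is genuinely different from the paper's. The paper argues as follows: since $d=1$ and $\alpha\in[1,2)$, singletons are non-polar for $X$ and hence for $Y$, so by \cite[Lemma 3.5.5(ii)]{CF2} every non-negative invariant function is constant and the invariant $\sigma$-algebra is trivial; then it invokes Fitzsimmons' quasi-sure ratio ergodic theorem \cite[Theorem 1.1]{Fi} (with $g=1$), which directly yields $\frac1t\int_0^t P_sf(x)\,ds\to\mu(f)$ for \emph{every} $x$ (quasi-everywhere equals everywhere here, again by non-polarity of points), and uniqueness then follows in one line by integrating this limit against any invariant $\nu$ and using bounded convergence. Your proposal replaces the citation of the ratio ergodic theorem by Birkhoff's theorem under $\Pp_\mu$ plus a recurrence/strong-Markov promotion to all starting points, which is more self-contained; the price is that the two supporting steps you sketch need repair.

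First, the uniqueness paragraph as written has a gap: irreducibility alone does not give $\nu\ll\mu$ for an arbitrary invariant probability $\nu$ (that requires $P_t(x,\cdot)\ll\mu$ for every $x$, which in this setting does hold but only via the quasi-continuity argument the paper uses later in the proof of Theorem \ref{pro100}(ii)), and the density $d\nu/d\mu$ is not bounded a priori, so reducing to bounded invariant functions needs a truncation argument. The simplest fix is to delete this paragraph entirely: once you have the Ces\`aro convergence for every $x$, integrating it against $\nu$ and applying bounded convergence gives $\nu=\mu$, which is exactly the paper's uniqueness argument. Second, in the promotion step the debut $\sigma_D$ of a Borel set $D$ with $\mu(D^c)=0$ need not satisfy $Y_{\sigma_D}\in D$ (the process can enter $D$ at times decreasing to $\sigma_D$ while sitting in $D^c$ at the debut), so the strong Markov property at $\sigma_D$ does not hand you a starting point in $D$. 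The clean repair uses exactly the pointwise recurrence you already invoke: pick a single $y\in D$; the hitting time $T$ of $\{y\}$ is $\Pp_x$-a.s.\ finite for every $x$ because singletons are non-polar, $Y_T=y\in D$, and the strong Markov property at $T$ together with the insensitivity of Ces\`aro averages to the initial segment $[0,T]$ gives the convergence $\Pp_x$-a.s. (Alternatively, apply the Markov property at a fixed time $t_0>0$ after establishing $P_{t_0}(x,\cdot)\ll\mu$ via quasi-continuity.) With these two repairs your argument is correct, at the cost of redoing by hand what the citation of \cite{Fi} provides in the paper.
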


The goal of this paper is to study the ergodicity of $Y$; that is, the way of
the marginal distribution of $Y$ converging to its equilibrium distribution $\mu$. For this, we need to describe the Dirichlet form of the time changed process
$Y$.

\medskip

Let $\FF_e$ be the extended Dirichlet space of $(\EE, \FF)$ for the
symmetric $\alpha$-stable process $X$. By \cite[(6.5.4)]{CF2},
\begin{equation}\label{e:1.2}
\FF_e = \{u: \hbox{ Borel measurable with } |u|<\infty \hbox{ a.e. and }
 \EE (u, u)<\infty\}.
 \end{equation}
It follows from \cite[Corollaries 3.3.6 and 5.2.12]{CF2} that
$(\FF_e, \EE)$ is also the extended Dirichlet space of the time changed process
$Y$. Thus the Dirichlet form for process $Y$ on $L^2(\R; \mu)$ is
$(\EE, \FF^\mu)$,
where
\begin{equation}\label{e:1.3}
\FF^\mu = \FF_e  \cap L^2 (\R; \mu) =\left\{ u\in L^2(\R; \mu): \EE
(u, u)<\infty \right\}.
\end{equation}
Since the space $C^\infty_c(\R)$ of smooth function with compact
support is a core for the Dirichlet form $(\EE, \FF)$ of $X$, by
\cite[Theorem 5.2.8]{CF2}, $C^\infty_c (\R)$ is also a  core for
$(\EE, \FF^\mu)$. It in particular implies that $(\EE, \FF^\mu)$ is
a regular Dirichlet form on $L^2(\R; \mu )$. By abusing the notation
a little bit, we also denote the $L^2$-generator of $X$ by
$\Delta^{\alpha/2}$. Then the $L^2$-generator of $Y$ (or
equivalently, of $(\EE, \FF^\mu)$) is  $  a \Delta^{\alpha/2}$.
Let $$P_tf(x)=\Ee_x(f(Y_t)), \quad f\in \mathscr{B}_b(\R)$$ be the semigroup of $Y$ (or
equivalently, associated with $(\EE, \FF^\mu)$).

\subsection{Main Results}

 We now present results on various ergodic properties of $Y$.
 For a function $f$ defined on $\R$, we use $\mu (f)$ to denote
 the integral $\int_{\R} f(x) \mu (dx)$ whenever it is well defined.
We first consider the case of $1<\alpha<2$.

\begin{theorem} \label{thm1}
Suppose $\alpha \in (1, 2)$. For  $r>0$, set
$$\Phi(r):=\inf_{|x|\ge r}\frac{a(x)}{(1+|x|)^\alpha},\qquad
\Phi_0(r):=\inf_{|x|\le r} \frac{a(x)}{(1+|x|)^\alpha}$$ and
$$
K(r):=\sup_{|x|\leqslant r}a(x)^{-1},\quad k(r):=\inf_{|x|\leqslant r}a(x)^{-1},
\quad K_0 (r) :=   \frac{K(r)^{1+1/\alpha}}{k(r)^2}.
$$
Assume in addition that
$a$ is locally bounded between two positive constants.
Then the following holds.

\begin{itemize}
\item[\rm (i)] If \begin{equation}\label{con1}
\lim_{r\to\infty}\Phi(r)>0,\end{equation} then the following Poincar\'{e} inequality
 \begin{equation}\label{thm1-1}\mu(f-\mu(f))^2\le C \EE(f,f), \quad f\in \FF^\mu \end{equation}
holds for some constant $C>0$. Equivalently,
for  every $f\in L^2(\R; \mu)$ and $t>0$,
\begin{equation}\label{e:1.6}
\|P_tf -\mu (f)\|_{L^2(\R;\mu)}\le e^{-t/C} \| f\|_{L^2(\R; \mu)}
.
\end{equation}

\item[\rm (ii)] If \begin{equation}\label{con2}
\lim_{r\to\infty}\Phi(r)=\infty,\end{equation} then the following
super Poincar\'{e} inequality
\begin{equation}\label{thm2-1}
\mu(f^2)\le r\EE (f,f)+\beta(r)\mu(|f|)^2, \quad r>0, f\in \FF^\mu \end{equation}
holds with
\begin{equation}\label{e:1.9} \beta(r)=
C_1\left(1+r^{-1/\alpha} K_0 \circ \Phi^{-1}(C_2/r)\right)
\end{equation}
 for some constants $C_1$ and $C_2>0$.
 Consequently, if  $\int_t^\infty  \beta^{-1}(r)/ r dr<\infty$ for some
 $t>0$, then
 there is a constant $C_3>0$ so that for every $f\in L^2(\R; \mu)$ and $t>1$,
 \begin{equation}\label{e:1.10}
\sup_{x\in \R} | P_t f(x)-\mu (f)|
 \leq C_3  e^{-t/C} \| f\|_{L^2(\R; \mu)},
\end{equation} where $C>0$ is a constant appearing in \eqref{e:1.6}.
In this case, we have
$$ \sup_{x\in \R} \| P_t(x, \cdot) -\mu \|_{TV}
:= \sup_{x\in \R}  \sup_{0\leq f \leq 1} | P_t f(x) -\mu (f) |
 \leq C_3  e^{-t/C},\quad t>1,
 $$ where for a signed measure $\nu$, $\| \nu \|_{TV}$ is denoted by its variation, and $P_t(x,\cdot)$ is the probabilty distribution of $Y_t$ with $Y_0=x$.

\item[\rm (iii)]If \begin{equation}\label{con3}
\lim_{r\to\infty}\Phi_0(r)=0,\end{equation} then the following weak
Poincar\'{e} inequality holds:
for every $r>0$,  there is
\begin{equation}\label{e:1.11}
\alpha(r)=C_4 \inf\Big\{\frac{1}{\Phi_0(s)}: \mu(B(0,s))\geq \frac{1}{1+r}\Big\}
\end{equation}
for some constant $C_4 >0$ independent of $r$ so that
\begin{equation}\label{thm3-1}\mu(f^2)\le \alpha(r)
\EE (f,f)+r\|f\|_\infty^2
\quad \hbox{for every } f\in \FF^\mu \hbox{ with } \mu(f)=0.
\end{equation}
Consequently,
\begin{equation*}
 \| P_t f -\mu (f) \|_{L^2(\R; \mu)}^2
\leq  \xi (t) \left( \| f \|_{L^2(\R; \mu)}^2+
\| f  \|_\infty^2\right)
\end{equation*}
for every $ f\in L^2(\R; \mu)$ and $t>0$, where
$ \xi (t) = 2\inf\{r>0: -  \alpha (r) \log r \leq 2 t\}$.
Note that $\lim_{t\to \infty} \xi (t)=0$.
\end{itemize}
\end{theorem}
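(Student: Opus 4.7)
My plan is to establish the three functional inequalities \eqref{thm1-1}, \eqref{thm2-1}, \eqref{thm3-1} directly and then invoke the standard equivalences between these and the corresponding semigroup convergence statements---Gross/Bakry--Emery for (i), F.-Y.~Wang's theorem relating super Poincar\'e with an integrable $\beta^{-1}(r)/r$ to uniform exponential decay for (ii), and R\"ockner--Wang's theorem relating weak Poincar\'e to algebraic decay for (iii). All three inequalities will be built out of two common ingredients.

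\textbf{Ingredient A (local inequality).} Since $a$ is locally bounded between two positive constants, $\mu|_{B(0,R)}$ is comparable to Lebesgue measure with constants $K(R),k(R)$. Combining this with the one-dimensional Nash-type inequality for the $\alpha$-stable Dirichlet form on a bounded interval (whose scaling reflects the heat-kernel decay rate $t^{-1/\alpha}$), I would obtain
$$\int_{B(0,R)} f^2\, d\mu \;\leq\; c_1(R)\, \EE(f,f) + c_2(R)\,\mu(|f|)^2,$$
with the explicit dependence of $c_i(R)$ on $R,K(R),k(R)$ that produces the factor $K_0(R)=K(R)^{1+1/\alpha}/k(R)^2$ in \eqref{e:1.9}.

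\textbf{Ingredient B (Hardy-type tail inequality).} I would show that, for every $R>0$,
$$\int_{|x|>R}\frac{f(x)^2}{(1+|x|)^\alpha}\,dx \;\leq\; c_3 R\, \EE(f,f) + c_4 R^{1-\alpha}\int_{-R}^R f(y)^2\,dy,$$
by keeping only the cross-interactions with $|x|>R$ and $|y|<R/2$ in $\EE(f,f)$, using $|x-y|\asymp |x|$ to extract the kernel $|x|^{-1-\alpha}$, then applying the pointwise bound $(f(x)-f(y))^2 \geq \tfrac12 f(x)^2 - f(y)^2$ and integrating in $y$. The integrability of $(1+|x|)^{-\alpha}$ at infinity (which crucially requires $\alpha>1$) is what makes the resulting absorption work.

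\textbf{Assembly and branching.} Combining A and B with the tail bound $a(x)\ge (1+|x|)^\alpha \Phi(R)$ on $\{|x|>R\}$ yields
$$\int_{|x|>R} f^2\, d\mu \;\leq\; \frac{c_3 R}{\Phi(R)} \EE(f,f) + \frac{c_4 R^{1-\alpha}}{\Phi(R)}\int_{-R}^{R} f^2\, dy.$$
For \textbf{(i)}, pick $R$ large so that $\Phi(R)\ge c>0$ by \eqref{con1}, then add the local Poincar\'e half of Ingredient A to get \eqref{thm1-1}. For \textbf{(ii)}, given $r>0$ set $R(r)=\Phi^{-1}(C_2/r)$ so that $R(r)/\Phi(R(r))\asymp r$; the tail term becomes $\lesssim r\EE(f,f)$ plus a local $L^2$-integral, and feeding that through the local Nash inequality of Ingredient A converts it to $\mu(|f|)^2$ times the stated $\beta(r)$ with the $r^{-1/\alpha}K_0(R(r))$ factor. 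For \textbf{(iii)}, the hypothesis \eqref{con3} precludes $\Phi\to\infty$, so I work instead with $\Phi_0$ and choose $s$ so that $\mu(B(0,s))\ge 1/(1+r)$; the residual tail mass $\mu(B(0,s)^c)\le r$ contributes a $r\|f\|_\infty^2$ correction in place of $\mu(|f|)^2$, fitting the R\"ockner--Wang weak-Poincar\'e framework with $\alpha(r)$ as in \eqref{e:1.11}.

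\textbf{Main obstacle.} The delicate point is the sharp tuning of $R=R(r)$ in part (ii): getting the precise exponent $r^{-1/\alpha}$ and the precise power $K^{1+1/\alpha}/k^2$ in \eqref{e:1.9} forces me to track both the Nash exponent of the 1D $\alpha$-stable form on an interval and the $|x|^{-1-\alpha}$ scaling in Ingredient B simultaneously. Once those exponents are correctly matched, the remainder of the argument is largely bookkeeping on top of the two ingredients above.
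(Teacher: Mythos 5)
The crux of Theorem \ref{thm1} is your Ingredient B, and the argument you sketch does not prove it. Keeping only the cross-interactions between $\{|x|>R\}$ and $\{|y|<R/2\}$, using $|x-y|\asymp |x|$ and $(f(x)-f(y))^2\ge \tfrac12 f(x)^2-f(y)^2$, and integrating over a $y$-set of measure $R$ yields only
\[
\int_{\{|x|>R\}}\frac{f(x)^2}{|x|^{1+\alpha}}\,dx \;\le\; \frac{c}{R}\,\EE(f,f)+\frac{c}{R^{1+\alpha}}\int_{\{|y|<R/2\}}f(y)^2\,dy,
\]
i.e.\ you control the weight $R\,|x|^{-(1+\alpha)}$, which for $|x|\gg R$ is smaller than the claimed weight $(1+|x|)^{-\alpha}$ by the unbounded factor $|x|/R$. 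Attempting to repair this by pairing $x$ with $\{|y|<|x|/2\}$ (so that the $y$-integration supplies the missing factor $|x|$) makes the subtracted term $\int_{\{|y|<|x|/2\}}f(y)^2\,dy$ spill over into $\{|y|>R/2\}$ and reproduces the left-hand side with a constant too large to absorb (of order $4/(\alpha\epsilon)$ against $(1-\epsilon)(2/3)^{1+\alpha}$), while a dyadic summation of the single-scale bound multiplies the energy term by the number of scales. The weaker inequality your argument actually gives corresponds precisely to the condition $\liminf_{|x|\to\infty}a(x)/(1+|x|)^{1+\alpha}>0$, which the paper explicitly identifies (via \cite[Proposition 1.7]{CW12}) as the suboptimal criterion; your Ingredient B with weight $(1+|x|)^{-\alpha}$ is essentially equivalent to part (i) in the critical case $a(x)=(1+|x|)^\alpha$, i.e.\ to the statement being proved, so it cannot be invoked as an elementary lemma.

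What is missing is the paper's substitute for Ingredient B: the Lyapunov estimate of Lemma \ref{lemma2}, namely that for $V(x)=1+|x|^\theta$ with $\theta$ small the truncated generator satisfies $\LL^{(\alpha)}_{>1}V\le -c\,\frac{a(x)}{(1+|x|)^\alpha}\,V$ off a compact set, the negativity coming from a hypergeometric-series computation whose limit is $\pi\cot(\pi\alpha/2)<0$; this is where $\alpha\in(1,2)$ genuinely enters, not merely through integrability of $(1+|x|)^{-\alpha}$. Combined with $\mu\big(f^2(-\LL^{(\alpha)}_{>1}V)/V\big)\le\wh\EE(f,f)\le\EE(f,f)$ this produces the tail estimate \eqref{proof1}, after which your assembly for (i) and (ii) (local Poincar\'e/Nash on balls, then tuning $r=\Phi^{-1}(c/s)$, then the standard semigroup equivalences) is indeed how the paper proceeds, and your Ingredient A matches the paper's Lemmas \ref{lemma1} and \ref{lemma3}. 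Note also that for (iii) your plan needs a local Poincar\'e inequality on $B(0,s)$ with constant of order $1/\Phi_0(s)$, whereas Ingredient A alone gives a constant of order $K(s)^2 s^\alpha/k(s)$, which does not match \eqref{e:1.11}; the paper obtains $1/\Phi_0(s)$ by first applying part (i) to the reference weight $a_0(x)=(1+|x|)^\alpha$ and comparing densities on the ball, so part (iii) also rests on the Lyapunov input you have not supplied.
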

As a direct consequence of \eqref{thm2-1}, we get that the following
defective Poincar\'{e} inequality
\begin{equation}\label{thm2-2}\mu(f^2)\le c_1
\EE(f,f)+c_2\mu(|f|)^2, \quad f\in \FF^\mu
\end{equation}
holds for some constants $c_1$, $c_2>0$. Since the Dirichlet form
$(\EE, \FF^\mu)$ is irreducible, i.e.\ $\EE(f, f)=0$ implies $f$ is
a constant function, it follows from \cite[Corollary 1.2]{W13} (see
also \cite[Theorem 1]{Mi}) that the defective Poincar\'{e}
inequality \eqref{thm2-2} is equivalent to the
 Poincar\'{e} inequality \eqref{thm1-1}. Therefore, the super
Poincar\'{e} inequality \eqref{thm2-1} is stronger than the
Poincar\'{e} inequality \eqref{thm1-1}.
Clearly, for $t>1$ the uniform ergodicity
\eqref{e:1.10} is also stronger than ergodicity \eqref{e:1.6}
under stationary distribution.
The super Poincar\'e
inequality \eqref{thm2-1} is equivalent to the uniform integrability
of the semigroup $(P_t)_{t\ge0}$, and also the absence of the
essential spectrum of its generator if the semigroup
$(P_t)_{t\ge0}$ has an asymptotic density, see, e.g., \cite[Theorems 6.1, 2.1 and 5.1]{W2000}.
It often implies the ultracontractivity of $P_t$ for some $t>0$.
See \cite[Chapter 3]{WBook} for more details about the
applications of super Poincar\'{e} inequality.
Clearly the Poincar\'{e} inequality \eqref{thm1-1} implies that the weak
Poincar\'{e} inequality \eqref{thm3-1} holds with $\alpha (r)=C$.
On the other hand, under condition \eqref{con3},
it is easy to see that $\Phi(r)=0$ for all $r>0$,
and the function $\alpha(r)$ defined in \eqref{e:1.11} tends to $\infty$ as $r\to 0$.
 The weak Poincar\'e
inequality \eqref{thm3-1} characterizes the $L^2$-convergence rates
of the semigroup $(P_t)_{t\ge0}$ slower than exponential.
See, e.g.,  \cite[Chapter 4]{WBook} for more information on weak Poincar\'{e} inequality and its consequences.

\medskip

Theorem \ref{thm1} is sharp in many situations;  see Examples
\ref{exa1} and \ref{exa2} below. Note that the inequalities \eqref{thm1-1} and \eqref{thm2-1} are related to the  weighted Poincar\'{e} inequalities for non-local Dirichlet forms studied in \cite{CW12}. However, applying
\cite[Proposition 1.7]{CW12} with $d=1$ to our case,
one only gets that the Poincar\'{e} inequality \eqref{thm1-1} holds when
$$
\lim_{|x|\to\infty}\frac{a(x)}{(1+|x|)^{1+\alpha}}>0,
$$
 which is stronger than condition \eqref{con1} and is far from being optimal.
 See also Examples \ref{exa1} and \ref{exa2} below. Furthermore, according to
\cite[Table 5.1, p.\ 100]{Chen}, Theorem \ref{thm1} is also optimal
for the case when $\alpha=2$, which corresponds to the limiting
 one-dimensional Brownian case. See Appendix for more details on
 this Brownian case.

We now present some examples to illustrate our main results.
The proofs of the claims made in these examples are
given in Section 3.

\begin{example}\label{exa1}
Suppose $\alpha \in (1, 2)$.
Let $a(x)=C_\gamma(1+|x|)^{\gamma}$ with $\gamma>1$ such that $\int a(x)^{-1}\,dx=1$.
\begin{itemize}
\item[(i)] The Poincar\'{e} inequality \eqref{thm1-1} holds for some
constant $C>0$ if and only if $\gamma\ge \alpha$.
\item[(ii)] The super Poincar\'{e} inequality \eqref{thm2-1} holds for
some function $\beta:(0,\infty)\to(0,\infty)$ if and only if
$\gamma>\alpha$. In this case, there exists a constant $c>0$
such that the super Poincar\'{e} inequality \eqref{thm2-1} holds with
$$
\beta(r)=  c \Big(1+r^{-(\frac{1}{\alpha}+\frac{2\gamma}{\gamma-\alpha})}\Big),\quad
r>0,
$$
which is equivalent to
\begin{equation}\label{e:1.15}
\|P_t\|_{L^1(\R;\mu)\to L^\infty(\R;\mu)}\le
c_0\left(1+t^{-(\frac{1}{\alpha}+\frac{2\gamma}{\gamma-\alpha})}\right),\quad
t>0
\end{equation}
 for some constant $c_0>0$.
Since $\beta^{-1}(r)/r$ is integrable at infinity, uniform strong ergodicity \eqref{e:1.10} holds.

\item[(iii)] If $\gamma\in(1,\alpha)$, then the weak Poincar\'{e}
inequality \eqref{thm3-1} holds with
$$\alpha(r)=c\Big(1+r^{-(\alpha-\gamma)/(\gamma-1)}\Big),\quad r>0$$
 for some
constant $c>0$. Consequently, there exists a constant $c_0>0$
such that
$$\|P_t-\mu\|_{L^\infty(\R;\mu)\to L^2(\R;\mu)}\le
c_0 t^{-(\gamma-1)/({\alpha-\gamma})},\quad t>0.$$
The
rate function $\alpha$ given above is sharp in the sense that \eqref{thm3-1} does
not hold if
$$\lim_{r\to0}r^{(\alpha-\gamma)/(\gamma-1)}\alpha(r)=0.$$
\end{itemize}\end{example}

\begin{example}\label{exa2}
Suppose $1<\alpha<2$. Let $a(x)=C_{\alpha,\gamma}(1+|x|)^{\alpha}\log^\gamma(e+|x|)$ with $\gamma\in\R$ such that $\int a(x)^{-1}\,dx=1$
\begin{itemize}
\item[(i)] The Poincar\'{e} inequality \eqref{thm1-1} holds for some
constant $C>0$ if and only if $\gamma\ge 0$.

\item[(ii)] The super Poincar\'{e} inequality \eqref{thm2-1} holds for
some function $\beta:(0,\infty)\to(0,\infty)$ if and only if
$\gamma>0$. In this case there exists a constant $c>0$
such that the super Poincar\'{e} inequality \eqref{thm2-1} holds with
$$\beta(r)=  \exp\Big(c(1+r^{-1/\gamma})\Big),\quad
r>0.
$$
Consequently,  when $\gamma>1$,
\begin{equation}\label{e:1.16}
\|P_t\|_{L^1(\R;\mu)\to L^\infty(\R;\mu)}\le
\exp\Big(c_0(1+t^{-1/(\gamma-1)})\Big),\quad t>0
\end{equation}
holds for some constant $c_0>0$.
Moreover, the uniform strong ergodicity \eqref{e:1.10} holds when $\gamma>1$.
The rate function $\beta$ above is
sharp in the sense that \eqref{thm2-1} does not hold if
$$\lim_{r\to0} r^{1/\gamma}\log \beta(r)=0.$$

In particular, the following log-Sobolev inequality
\begin{equation}\label{logine}\mu(f^2\log f^2)\le C\EE(f,f) \quad  \hbox{for } f\in \FF^\mu \hbox{ with } \mu(f^2)=1
\end{equation}
holds for some constant $C>0$ if and only if $\gamma\ge1$. In
this case, we have
\begin{equation}\label{e:1.17}
{\rm Ent}_\mu (P_tf)\le {\rm Ent}_\mu (f)e^{-4t/C},\quad t>0, f\in L^2(\R^d; \mu) \hbox{ with } f>0,
\end{equation}
 where
${\rm Ent}_\mu (f)=\mu(f\log f)-\mu(f)\log \mu(f).$

\item[(iii)]If $\gamma<0$, then the weak Poincar\'{e}
inequality \eqref{thm3-1} holds with
$$\alpha(r)=c\Big(1+\log^{-\gamma}(1+r^{-1})\Big),\quad r>0$$ for some
constant $c>0$. Consequently, there exist constants $c_1$ and $c_2>0$
such that
$$\|P_t-\mu\|_{L^\infty(\R;\mu)\to L^2(\R;\mu)}\le
\exp\big(c_1-c_2t^{1/(1-\gamma)}\big),\quad t>0.$$ The
rate function $\alpha$ defined above is sharp in the sense that \eqref{thm3-1} does
not hold if
$$\lim_{r\to0}\log^{\gamma}(1+r^{-1})\alpha(r)=0.$$
\end{itemize}\end{example}

\medskip

We now turn to the case of $\alpha=1$. For $1<\delta<\beta$ and $r>0$, define
$$
\Psi_\beta(r):=\inf_{|x|\ge r}\frac{a(x)}{(1+|x|)^\beta},\quad
 \Psi_{\beta,\delta}(r):=\Psi_\beta(r)(1+r)^{\beta-\delta},\quad K_0(r):=\left(\frac{K(r)}{k(r)}\right)^2.
$$

\begin{theorem}\label{thm2}
 Suppose that $\alpha =1$.
 If there exists a constant $\beta>1$ such that
\begin{equation}\label{thm2-1-1}
\lim_{r\to\infty}\Psi_\beta(r)>0,
\end{equation}
then for every $\delta \in (1, \beta)$ there are positive
constants $C_1$ and $C_2$  so that the  super Poincar\'{e}
inequality \eqref{thm2-1} holds with
$$
\beta(r)=C_1\left(1+r^{-1}K_0\circ \Psi^{-1}_{\beta,\delta}(C_2/r)\right).$$
In particular, the Poincar\'{e} inequality \eqref{thm1-1} holds with
some constant $C>0$; or equivalently, the process $Y$ is
$L^2(\R;\mu)$-exponentially ergodic, i.e. for every $f\in L^2(\R;\mu)$ and $t>0$,
$$\|P_tf-\mu( f)\|_{L^2(\R;\mu)}\le e^{-t/C}\|f\|_{L^2(\R;\mu)}.$$
\end{theorem}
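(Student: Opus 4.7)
My plan is to mirror the strategy used for Theorem \ref{thm1}(ii), which produces a super Poincar\'e inequality by splitting the integral $\mu(f^2)$ over a ball $B(0,R)$ and its complement, but to pay extra attention to the fact that $\alpha=1=d$ is the critical exponent for the stable Dirichlet form on $\R$. Because of this criticality, a weighted Hardy inequality with weight $(1+|x|)^{-\alpha}$ fails, and this is precisely why the statement requires $\beta>1$ strictly and introduces an auxiliary $\delta\in(1,\beta)$: one trades a bit of decay in $a$ for room above criticality.

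The first main step is a tail estimate. For $|x|\ge R$, the hypothesis \eqref{thm2-1-1} together with the definition of $\Psi_{\beta,\delta}$ gives
\[
\frac{1}{a(x)}\le \frac{1}{\Psi_\beta(R)(1+|x|)^\beta}\le \frac{1}{\Psi_{\beta,\delta}(R)(1+|x|)^\delta}.
\]
Hence $\int_{|x|>R} f^2\,d\mu\le \Psi_{\beta,\delta}(R)^{-1}\int_\R f(x)^2(1+|x|)^{-\delta}\,dx$. The core analytic input is then a weighted Hardy-type inequality for the $1$-stable form, valid for every $\delta>1$:
\[
\int_\R f(x)^2(1+|x|)^{-\delta}\,dx\le c_\delta\,\EE(f,f)+c_\delta'\,\mu(|f|)^2,
\]
which I would establish by a truncation/cutoff argument combined with the one-dimensional Hardy inequality for $(-\Delta)^{1/2}$ on $\{|x|>1\}$ (where the weight behaves like $|x|^{-\delta}$ with $\delta>1=\alpha$, so Hardy holds), plus a Cauchy--Schwarz absorption of the constant-subtraction using $\mu$. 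This gives the tail bound $\int_{|x|>R} f^2\,d\mu\le c\Psi_{\beta,\delta}(R)^{-1}\EE(f,f)+c\Psi_{\beta,\delta}(R)^{-1}\mu(|f|)^2$.

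The second step is a local super Poincar\'e inequality on $B(0,R)$. Since $k(R)\le a(x)^{-1}\le K(R)$ on $B(0,R)$, one has
\[
\int_{B(0,R)}f^2\,d\mu\le K(R)\int_{B(0,R)}f^2\,dx\quad\text{and}\quad\mu(|f|\mathbf 1_{B(0,R)})\ge k(R)\int_{B(0,R)}|f|\,dx,
\]
so it suffices to establish the super Poincar\'e inequality for the $1$-stable form on $B(0,R)$ with respect to Lebesgue measure. This follows from the Nash inequality $\|g\|_2^{4}\le C\,\EE(g,g)\,\|g\|_1^{2}$ (known for the $1$-stable process in dimension one) restricted to $B(0,R)$, giving $\|g\|_2^2\le s\,\EE(g,g)+c_R s^{-1}\|g\|_1^2$ for every $s>0$, with $c_R$ tracking the polynomial dependence on $R$. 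Combining this with the ratio $K(R)/k(R)^2$ produced by passing between $dx$ and $d\mu$ yields a bound of the form
\[
\int_{B(0,R)}f^2\,d\mu\le K(R)s\,\EE(f,f)+\frac{C\,K(R)}{s\,k(R)^2}\mu(|f|)^2.
\]

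Adding the two estimates and setting $R=\Psi_{\beta,\delta}^{-1}(C_2/r)$ and $s=r/(2K(R))$ balances the coefficients in front of $\EE(f,f)$ at $r$ and produces the stated coefficient $\beta(r)=C_1(1+r^{-1}K_0\circ\Psi_{\beta,\delta}^{-1}(C_2/r))$ in front of $\mu(|f|)^2$. The Poincar\'e inequality \eqref{thm1-1} is then a direct consequence of \eqref{thm2-1} via the defective Poincar\'e argument cited after Theorem \ref{thm1}, and the $L^2$-exponential ergodicity follows from \eqref{thm1-1} in the standard way. I expect the \emph{main obstacle} to be the weighted Hardy inequality in the tail step: unlike the $\alpha>1$ case where $(1+|x|)^{-\alpha}$ integrated against $\EE(f,f)$ is subcritical, here one must exploit the strict gap $\delta>1=\alpha$ carefully, and any slackness in $\delta$ propagates into the factor $(1+R)^{\beta-\delta}$ in $\Psi_{\beta,\delta}$ — the precise choice of $\delta$ (which the statement leaves free) is what makes the criterion sharp enough to capture the polynomial case with arbitrary $\beta>1$.
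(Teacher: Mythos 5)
Your overall architecture (split $\mu(f^2)$ over $B(0,R)$ and its complement, a local Nash/Sobolev bound on the ball as in Lemma \ref{lemma3}, a tail estimate with coefficient of order $\Psi_{\beta,\delta}(R)^{-1}$, then optimize in $R$ and $s$) is the same as the paper's, and your final bookkeeping does produce the stated rate $\beta(r)=C_1(1+r^{-1}K_0\circ\Psi_{\beta,\delta}^{-1}(C_2/r))$, as well as the passage from \eqref{thm2-1} to \eqref{thm1-1} and to $L^2$-ergodicity. The gap is the tail step. Your ``core analytic input,'' the inequality $\int_{\R} f^2(1+|x|)^{-\delta}\,dx\le c_\delta\,\EE(f,f)+c_\delta'\,\mu(|f|)^2$, is precisely the hard part of the theorem, and the justification offered --- ``the one-dimensional Hardy inequality for $(-\Delta)^{1/2}$ on $\{|x|>1\}$, where the weight is $|x|^{-\delta}$ with $\delta>1=\alpha$, so Hardy holds'' --- is not an available fact. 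The case $\alpha=d=1$ is exactly the critical case in which the fractional Hardy inequality fails; more fundamentally, no bound of the form $\int f^2 w\,dx\le C\,\EE(f,f)$ with $\int w\,dx>0$ can hold for the Cauchy form, because that form is recurrent: there exist $f_n\in\FF$ with $0\le f_n\le 1$, $f_n\to 1$ pointwise and $\EE(f_n,f_n)\to0$, while $\int f_n^2w\,dx\to\int w\,dx>0$. Hence the additive term and/or a restriction to functions vanishing on a compact set are essential, and the resulting statement (integrable weight of homogeneity $\delta\neq\alpha$ against the critical form) is not a standard Hardy inequality one can cite; proving it is essentially of the same difficulty as the tail estimate you want, so as written the argument is unsupported at its key point.

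The paper closes exactly this hole with a device your sketch does not contain: for $\delta\in(1,\beta)$ it takes $V(x)=1+|x|^\theta$ and applies Lemma \ref{lemma2} to the \emph{truncated $\delta$-stable} generator $\LL^{(\delta)}_{>1}=a\Delta^{\delta/2}_{>1}$ (possible since $\delta>1$, whereas for $\alpha=1$ itself the drift constant $\cot(\pi\alpha/2)$ vanishes), obtaining $\LL^{(\delta)}_{>1}V\le -c\,\Psi_\beta(|x|)(1+|x|)^{\beta-\delta}V$ off a compact set; then, because the truncated kernel $C_{1,\delta}|z|^{-1-\delta}\I_{\{|z|>1\}}$ is dominated (up to a constant) by the Cauchy kernel, the quadratic-form inequality $\mu\bigl(f^2(-\LL^{(\delta)}_{>1}V)/V\bigr)\le c\,\EE(f,f)$ (as in Proposition \ref{pro1} and the proof of Theorem \ref{thm1}(i)) gives $\mu(f^2\I_{B(0,r)^c})\le c\,\Psi_{\beta,\delta}(r)^{-1}\bigl(\EE(f,f)+\mu(f^2\I_{B(0,r_0)})\bigr)$, which is the tail bound you need. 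If you replace your Hardy claim by this Lyapunov argument (or by an honest proof of your weighted inequality, e.g.\ via Green-function bounds for the Cauchy process killed on an interval, itself nontrivial), the remainder of your plan goes through and coincides with the paper's proof.
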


To illustrate Theorem \ref{thm2}, we reconsider Example \ref{exa1} for the case of $\alpha=1$.

\begin{example}[{\bf Continuation of Example \ref{exa1} with $\alpha=1$}]\label{exa3}Let $\alpha=1$, and $a(x)=C_\beta(1+|x|)^{\gamma}$ with $\gamma>1$ such that $\int a(x)^{-1}\,dx=1$.
Then, for the Dirichlet form $(\EE, \FF^\mu)$, the super
Poincar\'{e} inequality \eqref{thm2-1} holds with
$$\beta(r)\le c \Big(1+r^{-(1+\frac{2\gamma}{\gamma-\delta})}\Big),\quad
r>0$$ for any $\delta\in(1,\gamma)$ and some constant $c=c(\delta)>0$;
and equivalently,
$$\|P_t\|_{L^1(\R;\mu)\to L^\infty(\R;\mu)}\le
c_0\left(1+t^{-(1+\frac{2\gamma}{\gamma-\delta})}\right),\quad
t>0$$ holds for some constant $c_0=c_0(\delta)>0$.
Consequently, the uniform strong ergodicity \eqref{e:1.10} holds.
\end{example}

\subsection{Application: one-dimensional SDEs driven by  symmetric stable processes}

As a direct application of our main results, we   consider the following one dimensional stochastic differential equation (SDE) driven by a symmetric
$\alpha$-stable process $X$ on $\R$:
\begin{equation}\label{sde1} dZ_t=\sigma (Z_{t-})\,dX_t,\end{equation}
where $\alpha \in (1, 2)$ and $\sigma:\R\to\R$ is a locally $1/\alpha$-H\"older
continuous and strictly positive function. Then, the SDE
\eqref{sde1} has a unique strong solution (see the proof of Theorem
\ref{pro100} in Section \ref{section3}). Denote by $Z$
this unique solution. The process $Z$ has  strong
Markov property, and its infinitesimal generator is given by
$$
\LL u(x)=\int_{\R\backslash\{0\}}\big(u(x+\sigma (x)z)-u(x)-u'(x)\sigma (x)z\I_{\{|z|\le 1\}}\big)\frac{C_{1,\alpha}}{|z|^{1+\alpha}}\,dz.$$ Letting $v=\sigma (x)z$, we find that
\begin{equation}\label{e:1.13}
\LL u(x)=\sigma (x)^\alpha
\int_{\R\backslash\{0\}}\big(u(x+v)-u(x)-u'(x)v\I_{\{|v|\le 1\}}\big)\frac{C_{1,\alpha}}{|v|^{1+\alpha}}\,dv
= \sigma(x)^\alpha\Delta^{\alpha/2}.
\end{equation}

According to Theorem \ref{thm1}(i), we have the following statement for the exponential ergodicity of the process $Z$.

\begin{theorem}\label{pro100}
Let $Z$ be the unique solution to the SDE \eqref{sde1}.
\begin{itemize}
\item[\rm (i)] Suppose that
\begin{equation}\label{pro1000}
\liminf_{|x|\to\infty} \frac{\sigma (x)}{|x|}>0.
\end{equation}
Then the process $Z$ is exponentially ergodic. More explicitly,   $\mu(dx):=\frac{\sigma(x)^{-\alpha}\,dx}{\int \sigma (x)^{-\alpha}\,dx}$ is a unique reversible probability measure for the process $Z$, and there is a constant $\lambda_0>0$  such that for any $x\in\R$
$$\|P_t(x,\cdot)-\mu\|_{\small TV}\le C(x)e^{-\lambda_0 t},$$ where $P_t(x,\cdot)$ is the probability distribution of $Z_t$ with $Z_0=x$ and $C(x)$ is a positive measurable function on $\R$.

\item[\rm (ii)] Furthermore,  if \begin{equation}\label{pro2000}
\liminf_{|x|\to\infty} \frac{\sigma (x)}{|x|^\gamma}>0.
\end{equation}  holds for some constant $\gamma>1$, then the process $Z$ is uniformly strongly ergodic, i.e. there are costants $C_1$ and $\lambda_1>0$ such that  $$\sup_{x\in \R} \|P_t(x,\cdot)-\mu\|_{\small TV}\le C_1e^{-\lambda_1 t}.$$
\end{itemize}
\end{theorem}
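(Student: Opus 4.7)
The strategy is to identify the SDE solution $Z$ with the time-changed process $Y$ associated with $a(x)=\sigma(x)^\alpha$, and then to apply Theorem~\ref{thm1}. Under the stated H\"older regularity and strict positivity of $\sigma$, pathwise uniqueness for \eqref{sde1} follows from a classical Yamada--Watanabe type argument for SDEs driven by symmetric stable processes, yielding a unique strong solution $Z$. Applying It\^o's formula together with \eqref{e:1.13} shows that $Z$ solves the martingale problem for $\LL=\sigma^\alpha\Delta^{\alpha/2}$ on $C_c^\infty(\R)$. Since this operator coincides with the $L^2$-generator of the regular Dirichlet form $(\EE,\FF^\mu)$ attached to $Y$ with $a=\sigma^\alpha$, uniqueness of the martingale problem identifies $Z$ with $Y$ in law. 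The assumption on $\sigma$ guarantees that $\sigma^{-\alpha}$ is locally integrable, while \eqref{pro1000} combined with $\alpha>1$ gives integrability at infinity, so $\mu(dx)=\sigma(x)^{-\alpha}dx/\int\sigma^{-\alpha}dx$ is a bona fide probability measure; by Proposition~\ref{P:1.1} it is the unique reversible probability measure for $Z$.

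For part (i), condition \eqref{pro1000} yields
$$\liminf_{r\to\infty}\Phi(r)=\liminf_{r\to\infty}\inf_{|x|\ge r}\frac{\sigma(x)^\alpha}{(1+|x|)^\alpha}>0,$$
which is exactly \eqref{con1}. Theorem~\ref{thm1}(i) then delivers the Poincar\'e inequality \eqref{thm1-1} and the $L^2(\R;\mu)$-exponential contraction \eqref{e:1.6} with some rate $1/C$. To upgrade this to pointwise total variation convergence, I would use the smoothing property of $P_t$: the fractional-Laplacian structure together with the time-change construction guarantees that for each small $s>0$ and each $x\in\R$, $P_s(x,\cdot)$ has a density $p_s(x,\cdot)$ with respect to $\mu$ lying in $L^2(\R;\mu)$ (on compact sets this follows from a Nash-type local ultracontractivity since $a=\sigma^\alpha$ is bounded between positive constants there). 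The semigroup property together with Cauchy--Schwarz then yields, for every $t>s$ and every $0\leq f\leq 1$,
$$|P_tf(x)-\mu(f)|=|P_s(P_{t-s}f-\mu(f))(x)|\leq \|p_s(x,\cdot)\|_{L^2(\mu)}\,e^{-(t-s)/C},$$
so that $\|P_t(x,\cdot)-\mu\|_{TV}\leq C(x)e^{-\lambda_0 t}$ holds with $\lambda_0=1/C$ and $C(x)=e^{s/C}\|p_s(x,\cdot)\|_{L^2(\mu)}$.

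For part (ii), under \eqref{pro2000} one has $a(x)=\sigma(x)^\alpha\geq c(1+|x|)^{\alpha\gamma}$ for large $|x|$ with $\alpha\gamma>\alpha$, whence $\Phi(r)\to\infty$ and condition \eqref{con2} holds. Repeating the polynomial computation behind Example~\ref{exa1}(ii) with the growth exponent $\alpha\gamma$ in place of $\gamma$ gives the super Poincar\'e rate $\beta(r)=c(1+r^{-(1/\alpha+2\gamma/(\gamma-1))})$; its inverse function satisfies $\int_t^\infty \beta^{-1}(r)/r\,dr<\infty$, so Theorem~\ref{thm1}(ii) directly delivers the uniform TV bound $\sup_{x\in\R}\|P_t(x,\cdot)-\mu\|_{TV}\leq C_1 e^{-\lambda_1 t}$. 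The main technical obstacle lies in the identification step of the first paragraph: pathwise uniqueness for \eqref{sde1} with only $1/\alpha$-H\"older coefficient is delicate, and matching the pathwise solution $Z$ with the $\mu$-symmetric Hunt process built from $(\EE,\FF^\mu)$ hinges on the uniqueness of the martingale problem associated with $\sigma^\alpha\Delta^{\alpha/2}$. Once this identification is secured, both conclusions become direct corollaries of Theorem~\ref{thm1}, complemented in part (i) by the standard $L^2\!\to\!\mathrm{TV}$ smoothing argument above.
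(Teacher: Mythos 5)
Your reduction of part (i) to Theorem \ref{thm1}(i) via $a=\sigma^\alpha$ is the same first step as the paper, but both halves of your argument have genuine gaps. For part (ii), your route through the super Poincar\'e inequality breaks down: condition \eqref{pro2000} is only a \emph{lower} bound on $\sigma$, whereas the polynomial rate $\beta(r)=c\bigl(1+r^{-(1/\alpha+2\gamma/(\gamma-1))}\bigr)$ you import from Example \ref{exa1}(ii) uses the two-sided bound $a(x)\asymp(1+|x|)^{\gamma}$; in Theorem \ref{thm1}(ii) the function $\beta$ involves $K_0\circ\Phi^{-1}$ with $K_0(r)=K(r)^{1+1/\alpha}/k(r)^2$, and without an upper growth bound on $\sigma$ the quantity $k(r)=\inf_{|x|\le r}a(x)^{-1}$ can decay arbitrarily fast, so $\beta$ need not be polynomial and $\int^\infty\beta^{-1}(r)/r\,dr$ may diverge. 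The paper avoids this entirely: it proves Lebesgue irreducibility (from pointwise recurrence for $\alpha>1$), absolute continuity and aperiodicity of $P_t(x,\cdot)$, and then builds a \emph{bounded} Lyapunov function $V(x)=2-(1+\varphi(x))^{-\theta}$ with $\theta\in(0,\alpha(\gamma-1))$ satisfying $\LL V\le -c_1V+c_2\I_{B(0,r_0)}$ (via Sandri\'c's computation, as in \eqref{prooflemma2-2-3}--\eqref{prooflemma2-2}), and concludes uniform strong ergodicity from \cite[Theorem 5.2(c)]{DMT}; this needs no control of $\sigma$ from above.

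For part (i), your upgrade from \eqref{e:1.6} to total variation convergence rests on the claim that $P_s(x,\cdot)$ has an $L^2(\mu)$-density for each $x$, justified only by a ``local Nash'' remark. Under \eqref{pro1000} alone (the borderline case $a(x)\asymp(1+|x|)^\alpha$) the super Poincar\'e inequality fails by Example \ref{exa1}, so there is no ultracontractivity, and finiteness of $p_{2s}(x,x)$ for the time-changed process is not a purely local statement about $a$ near $x$; as written this step is unproved. The paper instead invokes the known equivalence, for reversible Markov processes, between the $L^2(\mu)$-spectral gap and exponential ergodicity in total variation with an $x$-dependent constant (\cite[Theorem 8.8]{Chen}), which is exactly the statement you need. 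Finally, you assert global existence of the strong solution without ruling out explosion: the cited well-posedness results give the solution only up to the explosion time, and the paper closes this by combining the Lyapunov estimate of Lemma \ref{lemma2} with the Foster--Lyapunov criterion \cite[Theorem 2.1(1)]{MT2}; your identification of $Z$ with the time-changed process (which you flag but do not resolve) should be handled at this same point.
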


To the best of our knowledge, Theorem \ref{pro100} is new.
Recall that the symmetric $\alpha$-stable process $X$
is recurrent for $\alpha>1$, but it is not ergodic for any $\alpha>1$.
Theorem \ref{pro100} shows that the solution of a stochastic
differential equation driven by $X$  can be
exponentially ergodic under some growth condition such as \eqref{pro1000} on the multiplicative coefficient, and can be uniformly strongly ergodic under \eqref{pro2000}.
On the other hand, according to Theorem \ref{thm2}(i) and (the proof of) Theorem \ref{pro100}(i), Theorem \ref{pro100}(i) also holds for $\alpha=1$ if the condition \eqref{pro1000} is replaced by \eqref{pro2000}.

\medskip

The remainder of this paper is organized as follows. In
Section \ref{S:2}, we   give a proof of Proposition \ref{P:1.1} and
present some preliminaries of non-local Dirichlet
forms corresponding to time-change of pure jump symmetric L\'{e}vy
processes. The proofs of all the results and examples mentioned above
are presented in Section \ref{section3}, where the sharp drift condition for
truncated fractional Laplacian is also given.

\section{L\'evy processes and their time-change}\label{S:2}

We begin with a proof of Proposition \ref{P:1.1}.

\medskip

\noindent{\bf Proof of Proposition \ref{P:1.1}.}
 Observe that since $\alpha \in [1, 2)$,
every singleton is non-polar for symmetric $\alpha$-stable process
$X$ on $\R$, and hence for $Y$. So it follows from \cite[Lemma
3.5.5(ii)]{CF2}
 that any non-negative invariant function $h$ (that is, $P_t h = h$ for
 every $t>0$) is constant. This implies that the invariant $\sigma$-algebra
 ${\mathcal I}:=\{A: \I_A \hbox{ is an invariant function}\}$ is trivial.
So by \cite[Theorem 1.1]{Fi} (taking $g=1$ there),
$$
 \lim_{t\to \infty} \frac1t \int_0^t P_s f(x) ds = \int_{\R} f(y) \mu(dy)
$$
 for every bounded non-negative function $f$ on $\R$ and for
  every $x\in \R$.
Suppose $\nu$ is an invariant probability measure of $Y$; that is, $\nu = \nu P_t$ for every $t>0$. Integrating the above display with respect to $\nu$
and applying bounded convergence theorem, we get
$$ \int_{\R} f(y) \nu (dy) = \int_{\R} f(y) \mu (dy),
$$
from which we conclude $\nu = \mu$. \qed

\medskip

Suppose that $\rho$ is a nonnegative measurable function on $\R$ such that
$\rho(0)=0$, $\rho(z)=\rho(-z)$ and $\int (1\wedge z^2)\rho(z)\,dz<\infty.$
Let $\wt X$ be the symmetric pure jump L\'evy process on $\R$
with L\'evy measure $\rho (z) dz$, and denote by $(\wt \EE, \wt \FF)$ its
Dirichlet form on $L^2(\R; dx)$. By \cite[(2.2.16)]{CF2},
$$
\wt \EE (f, g)=\frac12 \int_{\R \times \R} (f(x)-f(y))(g(x)-g(y))
 \rho (x-y) dx dy
$$
and
$$
\wt \FF=\{ f\in L^2(\R; dx): \wt \EE (f, f)<\infty\}.
$$
Moreover, we know from \cite[\S 2.2.2]{CF2}, $(\wt \EE, \wt \FF)$ is a
regular Dirichlet form on $L^2(\R; dx)$.
Using smooth mollifier, we see that both  $C^\infty_c (\R)$
and $C^2_c (\R)$ are cores for $(\wt \EE, \wt \FF)$.
Denote by  $(\sA, {\mathcal D} (\sA))$ the $L^2$-generator of $\wt X$
(or equivalently, of $(\wt \EE , \wt \FF)$).
It is easy to verify that $ C^2_c (\R)\subset
{\mathcal D} (\sA )$ and for $u\in C^2_c(\R)$,
\begin{equation}\label{e:2.15}
\sA u(x)=\int_{\R \setminus \{ 0\}} \left(
u(x+z)-u(x)-u'(x)z\I_{\{|z|\le 1\}}\right)\rho(z)dz.
\end{equation}
Observe that $\sA u\in L^2(\R; dx)$ for $u\in C^2_c (\R)$.
This is because if we use $\widehat f$ to denote the Fourier transform of $f$,
then $\widehat {\sA u} (\xi)= \psi (\xi)\wh u (\xi) $,
where
$$\psi (\xi)= \int_{\R} (1-\cos (\xi y)) \rho (y) dy
\leq  2\int_{\R} (1\wedge (\xi y)^2) \rho (y) dy \leq  2(1+\xi^2)
\int_{\R} (1\wedge y^2 ) \rho (y) dy\leq c_0 (1+\xi^2).
$$
It follows that $\| \sA u\|_2^2 =\| \wh {\sA u}\|_2^2 \leq c_0^2 \|
(1+|\cdot|^2) \wh{u}\|_2^2 \le 2c_0^2( \| u\|_2^2 + \|
u''\|_2^2)<\infty$.

Let $a$ be a positive and locally bounded measurable function on
$\R$ so that $a(x)^{-1}$ is locally integrable. Set
$\mu(dx)=a(x)^{-1}\,dx$. Then $\mu$ is a smooth measure. Let $A_t:=
\int_0^t 1/ a(\wt X_s) ds$, which is a positive continuous additive
functional of $\wt X$. Define
$$
\tau_t=\inf\{s>0: A_s>t\}
$$
and set $\wt Y_t= \wt X_{\tau_t}$.
It follows from \cite[Theorems 5.2.2 and 5.2.8]{CF2}
that $\wt Y$ is a $\mu$-symmetric strong Markov process on $\R^d$,
whose associated Dirichlet form $(\wt \EE, \wt \FF^\mu)$ is
regular on $L^2(\R; \mu)$ having $C^2_c(\R)$ as its core.
In view of \cite[Corollary 5.2.12]{CF2}, $\wt \FF^\mu$ is given by
$\wt \FF^\mu = \wt \FF_e\cap L^2(\R; \mu)$, where
$\wt \FF_e$ is the extended Dirichlet space of $(\wt \EE, \wt \FF)$.
We know from \cite[(2.2.18) and (6.5.4)]{CF2} that
\begin{equation}\label{e:2.16}
\wt \FF_e:= \left\{ u\in L^1_{loc} (\R)\cap {\mathcal S}':
 \wt \EE (u, u)<\infty \right\}
 \end{equation}
when $\wt X$ is transient, and
\begin{equation}\label{e:2.17}
\wt \FF_e = \{u: \hbox{ Borel measurable with } |u|<\infty \hbox{ a.e. and }
\wt  \EE (u, u)<\infty\}
 \end{equation}
 when $\wt X$ is recurrent.
Here ${\mathcal S}'$ denotes the space of tempered distributions.
Clearly  the $L^2$-infinitesimal generator of $Y$ is
$\LL=a \sA$. Abusing the notation a little bit, we also use
$\sA u $ to denote the function defined by \eqref{e:2.15} whenever
 its right hand side is well defined. The same remark applies to operator
 $\LL = a \sA$.

\begin{proposition}\label{pro1}
 Assume that $\rho(z)=0$ for any $z\in\R$ with $|z|\le 1$, and that there exists a constant $\delta\in(0,1]$ such that
$\int_{\{|z|>1\}}|z|^\delta\rho(z)\,dz<\infty$. Set
\begin{equation*}
\begin{split}\mathscr{H}_\delta:=\Big\{ \varphi\in \mathscr{B}(\R): &\hbox{ there is some } C>0 \hbox{ such that } \\
 &|\varphi(x)-\varphi(y)|\le C|x-y|^{\delta } \hbox{ for any }
 x,y\in \R \hbox{ with } |x-y|\ge1\Big\}.
 \end{split}
 \end{equation*}
Then   $\LL \varphi$
exists pointwise as
a locally bounded function for $\varphi \in \mathscr{H}_\delta$.
Moreover for any bounded measurable function $f$ on $\R$ with compact support
 and for $\varphi\in \mathscr{H}_\delta$,
$$
 \int_{\{|x-y|> 1\}}
 |f(x)-f(y)|\, |\varphi(x)-\varphi(y)| \rho (x-y) dx dy <\infty
 $$
and
$$
-\int_{\R}  f (x) \LL\varphi (x) \mu (dx) =\frac{1}{2} \int_{\{|x-y|>1\}}
  ((f(x)-f(y)) (\varphi(x)-\varphi(y)) \rho (x-y) dx dy .
$$
 \end{proposition}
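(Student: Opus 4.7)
The plan is to first establish pointwise existence and local boundedness of $\LL\varphi$, then prove absolute integrability of the double integral, and finally carry out a symmetrization argument to derive the identity.

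For the first claim, since $\rho$ vanishes on $\{|z|\le 1\}$, the representation \eqref{e:2.15} simplifies to $\sA\varphi(x)=\int_{\{|z|>1\}}(\varphi(x+z)-\varphi(x))\rho(z)\,dz$. For $\varphi\in\mathscr{H}_\delta$, the H\"older-type bound $|\varphi(x+z)-\varphi(x)|\le C|z|^\delta$ valid for $|z|\ge 1$, combined with the moment hypothesis $\int_{\{|z|>1\}}|z|^\delta\rho(z)\,dz<\infty$, shows that the integral defining $\sA\varphi(x)$ converges absolutely and is uniformly bounded in $x$. In particular, $\sA\varphi$ is a bounded function on $\R$, and since $a$ is locally bounded, $\LL\varphi=a\sA\varphi$ is locally bounded.

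For the absolute integrability of the double integral, use $|f(x)-f(y)|\le|f(x)|+|f(y)|$ together with the symmetry $\rho(x-y)=\rho(y-x)$ to reduce to estimating
$$\int_{\R}|f(x)|\int_{\{|z|>1\}}|\varphi(x+z)-\varphi(x)|\rho(z)\,dz\,dx,$$
which is bounded by $C\|f\|_{L^1(\R)}\int_{\{|z|>1\}}|z|^\delta\rho(z)\,dz<\infty$ since $f$ is bounded with compact support.

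For the identity, the definitions of $\mu$ and $\LL$ give $\int f(x)\LL\varphi(x)\,\mu(dx)=\int f(x)\sA\varphi(x)\,dx$, which is finite by the uniform bound on $\sA\varphi$. Fubini, justified by the integrability above, together with the change of variables $y=x+z$ (so that $|z|>1$ becomes $|y-x|>1$ and $\rho(z)\,dz=\rho(y-x)\,dy$), yields
$$-\int_{\R} f(x)\sA\varphi(x)\,dx = -\int_{\{|y-x|>1\}}f(x)(\varphi(y)-\varphi(x))\rho(y-x)\,dy\,dx=:-I.$$
Swapping the roles of $x$ and $y$ and using $\rho(y-x)=\rho(x-y)$ gives a second expression $I=\int_{\{|x-y|>1\}}f(y)(\varphi(x)-\varphi(y))\rho(x-y)\,dx\,dy$; adding the two expressions yields $2I=-\int_{\{|x-y|>1\}}(f(x)-f(y))(\varphi(x)-\varphi(y))\rho(x-y)\,dx\,dy$, which rearranges to the claimed identity.

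The main obstacle is essentially bookkeeping: $\varphi$ is assumed only to be Borel measurable, not locally bounded, so one must never integrate $\varphi$ against $f$ on its own but always keep it inside a difference $\varphi(x+z)-\varphi(x)$ or $\varphi(x)-\varphi(y)$. The H\"older condition defining $\mathscr{H}_\delta$ paired with the moment assumption on $\rho$ is exactly tuned so that such differences are absolutely integrable against $f\cdot \rho$, and the hypothesis that $\rho$ vanishes on $\{|z|\le 1\}$ cleanly avoids the gradient term in \eqref{e:2.15}, so that no local regularity of $\varphi$ is ever needed.
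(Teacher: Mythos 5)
Your proposal is correct and follows essentially the same route as the paper: the same pointwise bound $|\sA\varphi(x)|\le C\int_{\{|z|>1\}}|z|^\delta\rho(z)\,dz$ giving boundedness of $\sA\varphi$ and local boundedness of $\LL\varphi=a\sA\varphi$, the same reduction of the double integral via $|f(x)-f(y)|\le|f(x)|+|f(y)|$ and the symmetry of $\rho$, and the same symmetrization (your explicit swap of $x$ and $y$ is just the spelled-out version of the paper's one-line symmetry step). No gaps; the Fubini use is justified by the integrability you established.
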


\begin{proof} Since $\rho(z)=0$ for all $|z|\le 1$,
$$\sA u(x)=\int_{\{|z|>1\}}(u(x+z)-u(x)) \rho(z) dz.$$
 Let $\varphi\in \mathscr{H}_\delta$.
Then
\begin{equation*}
\begin{split}
|\sA \varphi(x)|&\le \int_{\{|z|>1\}}\big|\varphi(x+z)-\varphi(x)\big|\,\rho(z)\,dz\\
&\le C\int_{\{|z|>1\}}|z|^\delta\rho(z)\,dz=:c <\infty,
 \end{split}\end{equation*}
so $\sA \varphi $ is well-defined pointwise on $ \R$ and is bounded.
Consequently, $\LL \varphi(x)= a(x) \sA \varphi(x)$ is poinwisely
well-defined on $\R$ and is locally bounded.
It follows that for every bounded function $f$ with compact support on $\R$,
due to the symmetry of $\rho (z)$,
\begin{equation*}
\begin{split} \int_{\{|x-y|> 1 \}}&
  |f(x)-f(y)| |\varphi(x)-\varphi(y)| \rho (x-y) dx dy \\
\leq& 2 \int_{\R} |f(x)| \left( \int_{\{y\in \R: |y-x|> 1\}}
 |\varphi(y)-\varphi(x)| \rho (x-y) dy \right) dx\\
=&  2 \int_{\R} |f(x)| \left( \int_{\{z\in \R: |z|>1\}}
 |\varphi(x+z)-\varphi(x)| \rho (z) dz\right) dx <\infty.
 \end{split}\end{equation*}
Again by the symmetry of $\rho (z)$,
\begin{equation*}
\begin{split}-\int f \LL\varphi\,d\mu&=-\int f(x) \sA\varphi(x)\,dx\\
&=-\int f(x)\,dx\int_{\{|y-x|> 1\}}(\varphi(y)-\varphi(x))\rho(x-y)\,dy\\
&=\frac{1}{2}\int_{\{|y-x| > 1\}}(f(x)-f(y))(\varphi(x)-\varphi(y))\rho(x-y)\,dy\,dx .
\end{split}
\end{equation*} The proof is completed.
\end{proof}

\section{Proofs}\label{section3}

Let $\alpha\in(0,2)$ and $d=1$.
We first consider the symmetric L\'evy process  $\wh X$ whose L\'evy measure
is   $ C_{1,\alpha} |z|^{-(1+\alpha)} \I_{\{ |z|>1\}} dz$.
Denote by $(\wh \EE, \wh \FF)$ the Dirichlet form of $\wh X$ on $L^2(\R; dx)$.
We know from Section 2 that
\begin{equation*}\label{e:3.1}
 \wh \EE (f, f)= \frac12 \int_{\R \times \R} (f(x)-f(y))^2
    \frac{C_{1, \alpha}}{|x-y|^{1+\alpha}} \I_{\{ |x-y|>1\}} dx dy
\end{equation*}
and
\begin{equation*}\label{e:3.2}
 \wh \FF =\left\{f\in L^2(\R; dx): \wh \EE (f, f)<\infty \right\}.
\end{equation*}
Moreover, it follows from \eqref{e:2.16} and \eqref{e:2.17} that the extended
Dirichlet space $\wh \FF_e$ of $(\wh \EE, \wh \FF)$ is given by
\begin{equation*}\label{e:3.3}
 \wh \FF_e=\left\{f\in L^1_{loc}(\R; dx): \wh \EE (f, f)<\infty \right\}.
\end{equation*}
Suppose that $a$ is a positive and locally bounded measurable function on
$\R$ such that  $\int_{\R} a(x)^{-1} dx=1$.
Define $\mu(dx):=a(x)^{-1}\,dx$.
Let $\wh Y_t=\wh X_{\tau_t}$ be the time-change of $\wh X$, where
$$
\tau_t := \inf \Big\{ s>0: \int_0^s a(\wh X_r)^{-1} dr >t \Big\}.
$$
We know from Section 2 that the time changed process $Y$ is $\mu$-symmetric
and its associated Dirichlet form is given by $(\wh \EE, \wh \FF^\mu)$,
 which is regular on $L^2(\R; dx)$ with core $C^2_c (\R)$. Here
\begin{equation*}\label{e:3.4}
 \wh \FF^\mu = \wh \FF_e \cap L^2(\R; \mu)
  =\left\{f\in L^2(\R; \mu): \wh \EE (f, f)<\infty \right\}.
\end{equation*}
Let $(\Delta^{\alpha/2}_{>1}, {\mathcal D} (\Delta^{\alpha/2}_{>1}))$ and
$(\LL^{(\alpha)}_{>1}, {\mathcal D} (\LL^{(\alpha)}_{>1}))$ be the $L^2$-infinitesimal generator
of $\wh X$ and $\wh Y$, respectively. It follows from Section 2 that
$\LL^{(\alpha)}_{>1}= a \Delta^{\alpha/2}_{>1}$ and $C^2_c(\R) \subset
{\mathcal D} (\Delta^{\alpha/2}_{>1}) \cap {\mathcal D} (\LL^{(\alpha)}_{>1})$.
Moreover, for $u\in C^2_c(\R)$,
$$
\Delta^{\alpha/2}_{>1}u(x):=\int_{\{|z|>1\}}
\big(u(x+z)-u(x)\big)\,\frac{C_{1,\alpha}}{|z|^{1+\alpha}} dz.
$$

\medskip

To prove Theorem \ref{thm1}(i), we need two lemmas. We begin with
 a local Poincar\'{e} inequality for $\EE$.
We use $\oint_A f(x) \mu (dx)$ to denote $\int_A f(x) \mu (x)/\mu (A)$.

\begin{lemma}\label{lemma1}For any $r>0$ and $f\in C_c^{2}(\R)$,
\begin{equation*}\label{lemma1-1}
 \int_{B(0,r)} \bigg(f(x)-\oint_{B(0,r)} f(x)\,\mu(dx) \bigg)^2\,\mu(dx)\\
 \leq \frac{2^\alpha K(r)^2 \, r^\alpha}{C_{1, \alpha} \, k(r)}\, \EE (f, f).
\end{equation*}
\end{lemma}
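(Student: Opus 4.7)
The plan is to realize the variance against the probability measure $\mu_r(\cdot):=\mu(\cdot)/\mu(B(0,r))$ as a symmetric double integral, then compare each ingredient (the weights $a^{-1}$, the mass $\mu(B(0,r))$, and the jump kernel $|x-y|^{-(1+\alpha)}$) to its Lebesgue/constant counterpart on the ball. Concretely, I will use the standard variance identity
\begin{equation*}
\int_{B(0,r)}\!\bigg(f(x)-\oint_{B(0,r)} f\,d\mu\bigg)^{\!2}\mu(dx)
=\frac{1}{2\mu(B(0,r))}\int_{B(0,r)\times B(0,r)}\!(f(x)-f(y))^{2}\,\mu(dx)\,\mu(dy).
\end{equation*}
This is the natural choice because it is symmetric in $x$ and $y$, so all three estimates below can be carried out simultaneously.

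First I would bound the measure: since $a(x)^{-1}\le K(r)$ on $B(0,r)$, the product $\mu(dx)\mu(dy)$ is dominated by $K(r)^{2}\,dx\,dy$ on $B(0,r)\times B(0,r)$. For the normalizing mass, since $a(x)^{-1}\ge k(r)$ on $B(0,r)$ and $|B(0,r)|=2r$ in dimension one, we have $\mu(B(0,r))\ge 2r\,k(r)$. Finally, to compare the double integral with $\EE(f,f)$, I would insert the kernel trivially: for $x,y\in B(0,r)$ we have $|x-y|\le 2r$, hence
\begin{equation*}
(f(x)-f(y))^{2}
=\frac{(f(x)-f(y))^{2}}{|x-y|^{1+\alpha}}\,|x-y|^{1+\alpha}
\le (2r)^{1+\alpha}\,\frac{(f(x)-f(y))^{2}}{|x-y|^{1+\alpha}}.
\end{equation*}
Integrating over $B(0,r)\times B(0,r)$ and extending the domain to $\R\times\R$ gives a bound by $\frac{2(2r)^{1+\alpha}}{C_{1,\alpha}}\EE(f,f)$.

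Putting the three estimates together, the prefactor becomes
\begin{equation*}
\frac{K(r)^{2}}{2\cdot 2r\,k(r)}\cdot\frac{2(2r)^{1+\alpha}}{C_{1,\alpha}}
=\frac{2^{\alpha}K(r)^{2}\,r^{\alpha}}{C_{1,\alpha}\,k(r)},
\end{equation*}
which is precisely the constant in the lemma. There is no serious obstacle here: the only subtle point is the choice of mean. One might be tempted to replace $\oint f\,d\mu$ by the Lebesgue mean and then transfer $\mu$ to Lebesgue, but this produces the weaker constant $K(r)$ rather than $K(r)^{2}/k(r)$. Keeping the $\mu$-mean from the start and using the symmetric double-integral identity before swapping $\mu$ for Lebesgue is what yields the stated constant. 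Density of $C_{c}^{2}(\R)$ then lets the inequality be read either in $C_{c}^{2}(\R)$ or in all of $\FF^{\mu}$ as needed in the sequel.
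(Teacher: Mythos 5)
Your proof is correct and follows essentially the same route as the paper: reduce the weighted variance on $B(0,r)$ to a double integral of $(f(x)-f(y))^2$ over $B(0,r)\times B(0,r)$, insert the kernel via $|x-y|\le 2r$, bound the weights by $K(r)^2$ and the mass by $\mu(B(0,r))\ge 2rk(r)$, and compare with $\EE(f,f)$. The only (harmless) difference is that you use the exact variance identity with its factor $\tfrac12$, where the paper uses the Cauchy--Schwarz (Jensen) bound $\big(f(x)-\oint f\,d\mu\big)^2\le \oint (f(x)-f(y))^2\,\mu(dy)$; your bookkeeping in fact reproduces the stated constant $2^{\alpha}K(r)^2 r^{\alpha}/(C_{1,\alpha}k(r))$ exactly.
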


\begin{proof} For any $r>0$ and $f\in C_c^{2}(\R)$, by the Cauchy-Schwarz inequality,
\begin{equation*}
\begin{split}
\int_{B(0,r)} &\bigg(f(x)- \oint_{B(0,r)} f(y)\,\mu(dy) \bigg)^2\,\mu(dx)\\
&\leq  \int_{B(0,r)} \bigg( \oint_{B(0, r)} (f(x)- f(y))^2 \mu(dy) \bigg)\,\mu(dx)\\
&\leq  \frac{2^{1+\alpha}r^{1+\alpha}}{C_{1, \alpha}\, \mu (B(0, r))} \int_{B(0, r)\times B(0, r)} {(f(x)-f(y))^2} \frac{C_{1, \alpha}}{|x-y|^{1+\alpha}} \,
a(x)^{-1} a(y)^{-1} dx dy \\
&\leq \frac{2^\alpha K(r)^2 \, r^\alpha}{C_{1, \alpha} \, k(r)} \, \EE (f, f).
\end{split}
\end{equation*}
 \end{proof}

In the following lemma, we set $V(x)=1+|x|^\theta$ for $\theta\in (0,1)$.
It is clear that $V\in \mathscr{H}_\theta$,
where $\mathscr{H}_\theta$ is defined in Proposition \ref{pro1}.

\begin{lemma}\label{lemma2}
For any $\alpha\in (1, 2)$, there exists a constant $\theta\in (0,1)$ small enough such that for the function $V$ defined above, $\LL^{(\alpha)}_{>1}V$ is well defined and there exists a constant $r_0>0$ large enough such that for all $x\in\R$,
\begin{equation}\label{lemma2-1}
\LL^{(\alpha)}_{>1}V(x)\le \frac{\theta \pi C_{1,\alpha} \cot (\alpha\pi/2)}{2\alpha}\frac{a(x)}{(1+|x|)^\alpha}V(x)\I_{B(0,r_0)^c}(x)+\frac{2C_{1,\alpha}\sup_{|x|\le r_0}a(x)}{\alpha-\theta}\I_{B(0,r_0)}(x).
\end{equation}
\end{lemma}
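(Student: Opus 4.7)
Fix a small $\theta\in(0,\min\{1,\alpha\})$; its exact smallness will be determined at the end. The sub-additivity $\bigl||a+b|^\theta-|a|^\theta\bigr|\le|b|^\theta$ for $\theta\in(0,1)$ places $V(x)=1+|x|^\theta$ in the class $\mathscr{H}_\theta$, and $\int_{|z|>1}|z|^\theta|z|^{-1-\alpha}dz<\infty$ whenever $\theta<\alpha$. Proposition~\ref{pro1} then shows $\LL^{(\alpha)}_{>1}V=a\cdot\Delta^{\alpha/2}_{>1}V$ is defined pointwise and locally bounded, with
\[
\Delta^{\alpha/2}_{>1}V(x)=C_{1,\alpha}\int_{|z|>1}\bigl(|x+z|^\theta-|x|^\theta\bigr)|z|^{-1-\alpha}dz.
\]
Applying the same sub-additivity inside the integral yields the universal estimate $\Delta^{\alpha/2}_{>1}V(x)\le 2C_{1,\alpha}/(\alpha-\theta)$. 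Multiplying by $a(x)$ and restricting to $|x|\le r_0$ accounts for the second summand in \eqref{lemma2-1}.

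\paragraph{Sharp asymptotic for large $|x|$.} By symmetry take $x>0$. The change of variables $z=xw$ followed by symmetrisation $w\mapsto-w$ gives
\[
\Delta^{\alpha/2}_{>1}V(x)=C_{1,\alpha}\,x^{\theta-\alpha}J(x),\qquad J(x):=\int_{1/x}^{\infty}\frac{(1+w)^\theta+|1-w|^\theta-2}{w^{1+\alpha}}\,dw.
\]
Since $(1+w)^\theta+(1-w)^\theta-2=\theta(\theta-1)w^2+O(w^4)$ near $w=0$, one checks that $J(x)-I(\alpha,\theta)=O(\theta\,x^{\alpha-2})$, where $I(\alpha,\theta):=\int_0^\infty\frac{(1+w)^\theta+|1-w|^\theta-2}{w^{1+\alpha}}\,dw$; note $\alpha<2$ is what makes this tail integrable and vanishing. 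Expanding $(1\pm w)^\theta=1+\theta\log|1\pm w|+O(\theta^2)$ in the integrand,
\[
I(\alpha,\theta)=\theta\int_0^\infty\frac{\log|1-w^2|}{w^{1+\alpha}}\,dw+O(\theta^2).
\]
The substitution $u=w^2$ reduces this to $(1/2)\int_0^\infty\log|1-u|\,u^{-1-\alpha/2}\,du$; integration by parts and the classical keyhole-contour identity $\mathrm{p.v.}\int_0^\infty r^{-s}(r-1)^{-1}dr=\pi\cot(\pi s)$ for $s=\alpha/2\in(1/2,1)$ then give
\[
\int_0^\infty\frac{\log|1-w^2|}{w^{1+\alpha}}\,dw=\frac{\pi\cot(\alpha\pi/2)}{\alpha}=:A(\alpha),
\]
which is strictly negative because $\alpha\in(1,2)$.

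\paragraph{Conclusion and main obstacle.} Writing $h(x):=V(x)(1+x)^{-\alpha}x^{\alpha-\theta}$, an elementary computation shows $h(x)\to 1$ as $x\to\infty$. The target bound $\Delta^{\alpha/2}_{>1}V(x)\le\frac{\theta\pi C_{1,\alpha}\cot(\alpha\pi/2)}{2\alpha}\cdot\frac{V(x)}{(1+|x|)^\alpha}$ is equivalent to $C_{1,\alpha}J(x)\le\tfrac{1}{2}\theta C_{1,\alpha}A(\alpha)\,h(x)$, with both sides negative. Since $C_{1,\alpha}J(x)=\theta C_{1,\alpha}A(\alpha)+O(\theta^2)+O(\theta\,x^{\alpha-2})$, choosing $\theta$ small first (to beat the $O(\theta^2)$ expansion error in $I(\alpha,\theta)$) and then taking $r_0$ sufficiently large depending on $\theta$ (to push $h(x)$ close to $1$ and absorb the $O(x^{\alpha-2})$ tail) yields the desired inequality for every $x\ge r_0$. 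Multiplying by $a(x)>0$ produces the first summand in \eqref{lemma2-1}, and together with the previous step completes the proof. The main obstacle is the explicit identification of the first-order $\theta$-coefficient of $I(\alpha,\theta)$ as exactly $\pi\cot(\alpha\pi/2)/\alpha$: this is where the cotangent factor driving the drift condition arises, and it relies on the principal-value Mellin identity above. Once $A(\alpha)$ is pinned down, the bookkeeping amounts to arranging two independent errors ($O(\theta^2)$ in $\theta$ and $O(x^{\alpha-2})$ as $x\to\infty$) to be absorbed within the factor-of-two slack between $A(\alpha)$ and the stated coefficient $A(\alpha)/2$ in \eqref{lemma2-1}.
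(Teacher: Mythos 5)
Your proposal is correct, and it reaches the key constant by a genuinely different route than the paper. The paper does not compute the large-$|x|$ asymptotics of $\Delta^{\alpha/2}_{>1}V$ from scratch: it quotes Sandri\'c's computation (\cite{Sa}, the steps around (3.33), (3.35)--(3.38)) to get $\limsup_{|x|\to\infty}|x|^{\alpha-\theta}\Delta^{\alpha/2}_{>1}V(x)=\tfrac{\theta C_{1,\alpha}}{\alpha}E(\alpha,\theta)$ with $E(\alpha,\theta)$ given by a binomial/hypergeometric series, and then evaluates $\lim_{\theta\to0}E(\alpha,\theta)=\pi\cot(\pi\alpha/2)$ using the partial-fraction expansion of the cotangent from Gradshteyn--Ryzhik; negativity of this limit for $\alpha\in(1,2)$ then yields \eqref{lemma2-1} for large $|x|$, exactly as in your final absorption step. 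You instead derive the asymptotics directly by the scaling $z=xw$, identify the limiting integral $I(\alpha,\theta)$, expand to first order in $\theta$, and evaluate $\int_0^\infty \log|1-w^2|\,w^{-1-\alpha}dw=\pi\cot(\pi\alpha/2)/\alpha$ via $u=w^2$, integration by parts and the principal-value identity $\mathrm{p.v.}\int_0^\infty r^{-s}(r-1)^{-1}dr=\pi\cot(\pi s)$; I checked this identity and the boundary-term cancellation at $u=1$, and they are correct, as are your error estimates $J(x)-I(\alpha,\theta)=O(\theta x^{\alpha-2})$ (using $\alpha<2$) and $I(\alpha,\theta)=\theta\pi\cot(\pi\alpha/2)/\alpha+O(\theta^2)$ (uniform for, say, $\theta\le\alpha/2$, since the $\log^2$ singularity at $w=1$ and the $\log^2 w\,(1+w)^\theta$ growth at infinity remain integrable against $w^{-1-\alpha}$). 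The treatment of $|x|\le r_0$ via sub-additivity of $t\mapsto t^\theta$ is identical to the paper's. What your route buys is a self-contained, elementary proof that does not lean on \cite{Sa} and makes transparent where the cotangent comes from; what the paper's route buys is an exact (not merely first-order in $\theta$) expression $E(\alpha,\theta)$ for the coefficient, which is reused later (with $E(\alpha,-\theta)$) in the proof of Theorem \ref{pro100}(ii). The only cosmetic slip is the remark that both sides of $C_{1,\alpha}J(x)\le\tfrac12\theta C_{1,\alpha}A(\alpha)h(x)$ are negative: $J(x)$ need not be negative for all $x$, only for $x$ beyond the eventual $r_0$, but this plays no role in your (valid) choice of $\theta$ first and $r_0$ second.
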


\begin{proof} Let $\theta\in (0,1)$ be a constant which is determined later. First, according to the fact that
$|x+z|^\theta\le |x|^\theta+|z|^\theta$ for any $x$, $z\in \R,$ we have  \begin{equation*}\label{prooflemma2-0}
\begin{split}
|\Delta^{\alpha/2}_{>1}V(x)|\leq &\int_{\{|z|\ge1\}}\big| V(x+z)-V(x)\big|\frac{C_{1,\alpha}}{|z|^{1+\alpha}}\,dz\\
\le&\int_{\{|z|\ge1\}} |z|^\theta\frac{C_{1,\alpha}}{|z|^{1+\alpha}}\,dz=\frac{2C_{1,\alpha}}{\alpha-\theta}<\infty. \end{split}
\end{equation*} Then, by the local boundedness of $a$, we know that $\LL^{(\alpha)}_{>1}V(x)$ is a well defined and locally bounded measurable function. Hence, it suffices to  verify \eqref{lemma2-1} for $|x|$ large enough.

For this, one can follow the proof of \cite[Theorem 1.2]{Sa} (see \cite[(3.33), (3.35)--(3.38)]{Sa} for more details) and obtain that
\begin{equation}\label{prooflemma2-1}\limsup_{|x|\to\infty} |x|^{\alpha-\theta}\Delta^{\alpha/2}_{>1}V(x)=\frac{\theta C_{1,\alpha}}{\alpha}E(\alpha,\theta),\end{equation}
where
\begin{equation*}
\begin{split}E(\alpha,\theta):=&\frac{\alpha}{\theta}\sum_{i=1}^\infty \textsf{C}_\theta^{2i}\frac{2}{2i-\alpha}-\frac{2}{\theta}\\
&+\frac{\alpha\,{_2}F_1(-\theta,\alpha-\theta,1+\alpha-\theta;-1)+ \alpha\,{_2}F_1(-\theta,\alpha-\theta,1+\alpha-\theta;1)}{\theta(\alpha-\theta)},\\
\textsf{C}_{\theta}^{i}:=&\frac{\theta(\theta-1)\cdots(\theta-i+1)}{i!} \end{split}
\end{equation*} and $\,{_2}F_1(a,b,c;z)$ is the Gauss hypergeometric function which can be calculated by the formula as follows
\begin{equation*}
\begin{split}\,{_2}F_1(a,b,c;z):&=\sum_{n=0}^\infty \frac{(a)_n(b)_n z^n}{(c)_{n}n!},\\
(r)_0&=1,\,\, (r)_{n}=r(r+1)\cdots(r+n-1), \,\, n\ge 1.\end{split}
\end{equation*}

Since \begin{align*}{_2}F_1(-\theta,&\alpha-\theta,1+\alpha-\theta;-1)+{_2}F_1(-\theta,\alpha-\theta,1+\alpha-\theta;1)\\
&=\sum_{n=0}^\infty
\frac{(-\theta)_n(\alpha-\theta)_n}{(1+\alpha-\theta)_n}\frac{(-1)^n}{n!}+\sum_{n=0}^\infty
\frac{(-\theta)_n(\alpha-\theta)_n}{(1+\alpha-\theta)_n}\frac{1^n}{n!}\\
&=2\sum_{i=0}^\infty
\frac{(-\theta)_{2i}(\alpha-\theta)_{2i}}{(1+\alpha-\theta)_{2i}}\frac{1}{(2i)!}\\
&=2+2\sum_{i=1}^\infty \frac{\theta(\theta-1)\cdots (\theta-2i+1)
(\alpha-\theta)}{(2i+\alpha-\theta)}\frac{1}{(2i)!},\end{align*}
we arrive at
\begin{equation}\label{prooflemma2-2-3}
\begin{split}
  E(\alpha,\theta) =&\frac{2}{\alpha-\theta}+{2\alpha}{}\sum_{i=1}^\infty
\frac{(\theta-1)\cdots
(\theta-2i+1)}{(2i)!}\bigg(\frac{1}{2i-\alpha}+\frac{1}{2i+\alpha-\theta}\bigg).\end{split}\end{equation}

It is easy to see that the series appearing in \eqref{prooflemma2-2-3} is absolutely convergent, and so
\begin{equation}\label{prooflemma2-2} \begin{split}\lim_{\theta\longrightarrow0}E(\alpha,\theta)&=\frac{2}{\alpha}+2\alpha\sum_{i=1}^\infty
\frac{(-1)\cdots
(-2i+1)}{(2i)!}\bigg(\frac{1}{2i-\alpha}+\frac{1}{2i+\alpha}\bigg)\\
&=\frac{2}{\alpha}-\sum_{i=1}^{\infty}\frac{4\alpha}{4i^{2}-\alpha^{2}}\\
&=\pi\cot (\pi\alpha/2),\end{split}\end{equation}
where in the last equality we have used the identity (see \cite[Formula 1.421.3, p.\ 44]{GR})
$$ {\cot }(\pi x)=\frac{1}{\pi x}+\frac{2x}{\pi}\sum_{i=1}^\infty \frac{1}{x^2-i^2},\quad x\in \R.$$
Since for $\alpha>1$, $\pi\cot (\pi\alpha/2)<0$, the desired assertion \eqref{lemma2-1} for $|x|$ large enough follows from \eqref{prooflemma2-1}-\eqref{prooflemma2-2}.
This completes the proof.
\end{proof}

We are now in the position to prove Theorem \ref{thm1}(i).

\begin{proof}[{\bf Proof of Theorem \ref{thm1}(i)}] Since $C_c^\infty(\R)$ is the core of $\FF^\mu$, it is enough to prove the desired inequality \eqref{thm1-1} for any $f\in C_c^\infty(\R)$.
Since $V\geq 1$, we have by \eqref{con1} and \eqref{lemma2-1} that
there exists a constant $r_0>0$ such that
$$\I_{B(0,r)^c}\le \frac{1}{C_3\Phi(r)}\frac{-\LL^{(\alpha)}_{>1}V}{V}+\frac{C_4}{C_3\Phi(r)}\I_{B(0,r_0)},\quad r\ge r_0,$$where $$C_3=-\frac{\theta \pi C_{1,\alpha} \cot (\alpha\pi/2)}{2\alpha}, \quad C_4=\frac{2C_{1,\alpha}\sup_{|x|\le r_0}a(x)}{\alpha-\theta}.$$
Then, for any $f\in C_c^\infty(\R)$,
$$\mu(f^2\I_{B(0,r)^c})\le
\frac{1}{C_3\Phi(r)}\mu\left(f^2\frac{-\LL^{(\alpha)}_{>1}V}{V}\right)+\frac{C_4}{C_3\Phi(r)}\mu(f^2\I_{B(0,r_0)}),\quad
r\ge r_0.$$ Note that for any $x$, $y\in\R$,
\begin{equation*}
\begin{split}
\left(\frac{f^2(x)}{V(x)}-\frac{f^2(y)}{V(y)}\right)(V(x)-V(y))&=f^2(x)+f^2(y)-\left(\frac{V(y)}{V(x)}f^2(x)+\frac{V(x)}{V(y)}f^2(y)\right)\\
&\le f^2(x)+f^2(y)-2|f(x)||f(y)|\\
&\le (f(x)-f(y))^2,
\end{split}
\end{equation*}
which, together with Proposition \ref{pro1}, yields that
$$\mu\left(f^2\frac{-\LL^{(\alpha)}_{>1}V}{V}\right)\le
\wh \EE(f,f).$$ Therefore, for any $f\in C_c^\infty(\R)$,
\begin{equation}\label{proof1} \mu(f^2\I_{B(0,r)^c})\le
\frac{1}{C_3\Phi(r)}\wh \EE(f,f)+\frac{C_4}{C_3\Phi(r)}\mu(f^2\I_{B(0,r_0)}),\quad
r\ge r_0.
\end{equation}
On the other hand, by Lemma \ref{lemma1}, for any $f\in
C_c^\infty(\R)$ with $\mu(f)=0$, and $r\ge r_0$,
\begin{equation*}
\begin{split}
 \mu(f^2\I_{B(0,r)})
\le
&\frac{C_5K(r)^2 r^{\alpha}}{k(r)}\,   \EE (f,f)+\frac{1}{\mu(B(0,r))}\left(\int_{B(0,r)}f\,d\mu\right)^2\\
=&\frac{C_5K(r)^2 r^{\alpha}}{k(r)} \,  \EE (f,f)
+\frac{1}{\mu(B(0,r))}\left(\int_{B(0,r)^c}f\,d\mu\right)^2,
\end{split}
\end{equation*}
where $C_5=2^\alpha/ C_{1,\alpha}$, and in the equality above we have used the fact that $\int_{B(0,r)}f\,d\mu=-\int_{B(0,r)^c}f\,d\mu$.
The Cauchy-Schwarz inequality yields that
$$
\left(\int_{B(0,r)^c}f\,d\mu\right)^2\le
\mu(B(0,r)^c)\int_{B(0,r)^c}f^2\,d\mu .
$$
Hence, for any $f\in C_c^\infty(\R)$ with $\mu(f)=0$, and $r\ge
r_0$,
\begin{equation}\label{e:3.9}
\mu(f^2\I_{B(0,r)})\le\frac{C_5K(r)^2 r^{\alpha}}{k(r)}  \EE(f,f)
+\frac{\mu(B(0,r)^c)}{\mu(B(0,r))}\mu(f^2 \I_{B(0, r)^c}).
\end{equation}
Putting it into the right hand side of \eqref{proof1} and noting that $\wh \EE(f,f)\le \EE(f,f)$,
 we get that for any $f\in
C_c^\infty(\R)$ with $\mu(f)=0$, and $r\ge r_0$,
\begin{equation}\label{e:3.10--}
\mu(f^2\I_{B(0,r)^c})\le
\left( \frac{1}{C_3\Phi(r)}+\frac{C_4}{C_3\Phi(r)} \frac{C_5K(r)^2 r^{\alpha}}{k(r)} \right) \EE(f,f)+\frac{C_4}{C_3\Phi(r)}\frac{\mu(B(0,r)^c)}{\mu(B(0,r))}\mu(f^2).
\end{equation}
Summing \eqref{e:3.9} and \eqref{e:3.10--}, we have
$$\mu(f^2)\le \left( \frac{1}{C_3\Phi(r)}+\left(\frac{C_4}{C_3\Phi(r)} +1\right) \frac{C_5K(r)^2 r^{\alpha}}{k(r)} \right)  \EE(f,f)
+\left( \frac{C_4}{C_3\Phi(r)}+1\right) \frac{\mu(B(0,r)^c)}{\mu(B(0,r))} \mu(f^2).
$$
Under \eqref{con1}, we can choose $r_1\ge r_0$ such that
$$
\left( \frac{C_4}{C_3\Phi(r_1)}+1\right) \frac{\mu(B(0,r_1)^c)}{\mu(B(0,r_1))}
\le \frac{1}{2},
$$
and so we arrive at
$$
\mu(f^2)\le 2 \left( \frac{1}{C_3\Phi(r_1)}+\left(\frac{C_4}{C_3\Phi(r_1)} +1\right) \frac{C_5K(r_1)^2 r_1^{\alpha}}{k(r_1)} \right)  \EE(f,f).
$$
This gives the desired Poincar\'{e} inequality \eqref{thm1-1} with
$$C=2 \left( \frac{1}{C_3\Phi(r_1)}+\left(\frac{C_4}{C_3\Phi(r_1)} +1\right) \frac{C_5K(r_1)^2 r_1^{\alpha}}{k(r_1)} \right).$$
 The equivalence of the Poincar\'{e} inequality
and the corresponding bound of $\|P_t-\mu\|_{L^2(\R;\mu)}$ is well
known, see, e.g.,  \cite[Theorem 1.1.1]{WBook}.
\end{proof}

To prove Theorem \ref{thm1}(ii), we  need the following local
super Poincar\'{e} inequality for $\EE$.

\begin{lemma} \label{lemma3}
There exists a constant $C_6>0$ such that for any $s$, $r>0$ and
any $f\in C_c^\infty(\R)$,
\begin{eqnarray*}
\int_{B(0,r)}f(x)^2\,\mu(dx) &\le &
sC_{1, \alpha}\int_{B(0,r)\times B(0,r)}\frac{(f(y)-f(x))^2}{|y-x|^{1+\alpha}}\,dy \,\mu(dy) \\
 & &+ \frac{C_6K(r) }{k(r)^2} \big(1+(C_{1, \alpha} s)^{-1/\alpha} K(r)^{1/\alpha}\big)\bigg(\int_{B(0,r)}|f|(x)\,\mu(dx)\bigg)^2\\
& \le & s\EE(f,f)+ \frac{C_6K(r) }{k(r)^2} \big(1+(C_{1, \alpha} s)^{-1/\alpha} K(r)^{1/\alpha}\big)\mu(|f|)^2.
\end{eqnarray*}
\end{lemma}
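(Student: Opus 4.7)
The plan is to derive this local super Poincar\'e inequality in two steps: first establish its Lebesgue analogue on $B(0,r)$, where both norms are with respect to $dx$ and the Dirichlet form is the truncated (Gagliardo-type) bilinear form on $B\times B$; then convert to the measure $\mu$ using the pointwise sandwich $k(r) \le a(x)^{-1} \le K(r)$ on $B(0,r)$.

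For the Lebesgue version, I would invoke the local Nash inequality on the unit ball,
\[ \int_{B(0,1)} f^2\,dx \le t \int_{B(0,1)\times B(0,1)} \frac{(f(x)-f(y))^2}{|x-y|^{1+\alpha}}\,dx\,dy + c_1 t^{-1/\alpha}\bigg(\int_{B(0,1)}|f|\,dx\bigg)^2, \quad t>0, \]
which follows by Young's inequality from the classical Nash inequality for the symmetric $\alpha$-stable process (equivalently, the on-diagonal heat kernel bound $p_t(x,x)\lesssim t^{-1/\alpha}$), localized to the unit ball; alternatively it is a direct consequence of fractional Sobolev embedding on $B(0,1)$. A simple change of variables $x=ru$, $y=rv$ transfers this to every $B(0,r)$ with the identical constant $c_1$: the factors $dx\,dy \sim r^2\,du\,dv$, $|x-y|^{-(1+\alpha)} \sim r^{-(1+\alpha)}|u-v|^{-(1+\alpha)}$ and the two norm factors all align once $t$ is rescaled by $r^\alpha$.

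For Step~2, substitute $\int_B f^2\,d\mu \le K(r)\int_B f^2\,dx$ and $\int_B |f|\,dx \le k(r)^{-1}\int_B |f|\,d\mu$ into Step~1 and then set $t = s\,C_{1,\alpha}/K(r)$. This turns the leading term into $s\,C_{1,\alpha}\int_{B\times B}\frac{(f(x)-f(y))^2}{|x-y|^{1+\alpha}}\,dx\,dy$ and produces the prefactor $c_1(C_{1,\alpha}s)^{-1/\alpha}K(r)^{1+1/\alpha}/k(r)^2 = c_1\,\frac{K(r)}{k(r)^2}\cdot (C_{1,\alpha}s)^{-1/\alpha}K(r)^{1/\alpha}$ on $\mu(|f|)^2$, matching the claimed form; the additive ``$1$'' inside the parenthesis is free to adjoin (it only weakens the bound). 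The second displayed inequality follows immediately from $C_{1,\alpha}\int_{B\times B}\frac{(f(x)-f(y))^2}{|x-y|^{1+\alpha}}\,dx\,dy \le 2\,\EE(f,f)$, after absorbing the factor $2$ by halving $s$ (the resulting $2^{1/\alpha}$ is pushed into $C_6$).

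The main obstacle is the local Nash inequality of Step~1: one has to check that even though restricting the stable Dirichlet form to $B\times B$ discards the ``exit'' contribution $\int_B f(x)^2\int_{B^c}|x-y|^{-(1+\alpha)}\,dy\,dx$, the truncated form still controls $\int_B f^2\,dx$ in Nash fashion with a dimension-free constant. This is precisely the Nash inequality for the censored $\alpha$-stable process on $B(0,1)$; granting it, the rest of the argument is a routine two-sided comparison of measures via $k(r)$ and $K(r)$.
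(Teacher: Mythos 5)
Your Step~1, as stated, is false, and this is the crux of the argument. Take $f\equiv 1$ on $B(0,1)$: the truncated form $\int_{B(0,1)\times B(0,1)}\frac{(f(x)-f(y))^2}{|x-y|^{1+\alpha}}\,dx\,dy$ vanishes while $\int_{B(0,1)}f^2\,dx=2$, so no inequality of the form $\int_B f^2\,dx\le t\int_{B\times B}\frac{(f(x)-f(y))^2}{|x-y|^{1+\alpha}}\,dx\,dy+c_1t^{-1/\alpha}\big(\int_B|f|\,dx\big)^2$ can hold for all $t>0$ (let $t\to\infty$). None of your proposed justifications delivers it: the classical Nash inequality for the stable process does not ``localize'' precisely because restricting the form to $B\times B$ discards the exit term (the very issue you flag), and a Nash inequality for the censored process on $B(0,1)$, insofar as it holds, is for functions in the closure of $C_c^\infty(B)$ (vanishing suitably at the boundary), whereas here you need it for arbitrary restrictions of $f\in C_c^\infty(\R)$, which include constants. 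The correct local statement necessarily carries a zeroth-order term, i.e.\ $\beta(t)=c_1(1+t^{-1/\alpha})$; this local super Poincar\'e inequality, uniform over balls, is exactly what the paper imports by combining the uniform-on-balls Sobolev inequality of dimension $2/\alpha$ (\cite[Theorem 3.1]{CK2}) with \cite[Corollary 3.3.4]{WBook}.

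Once you repair Step~1 in this way, your scaling $x=ru$, $y=rv$ no longer reproduces the same rate function: the additive constant picks up a factor $r^{-1}$, so on $B(0,r)$ you get $\beta_r(s)=c_1(r^{-1}+s^{-1/\alpha})$ rather than $c_1(1+s^{-1/\alpha})$. This matches the claimed form of the lemma only for $r$ bounded away from $0$ (say $r\ge 1$), which is in fact all that is used later (the lemma is applied with $r\ge r_0$), and the same constant-function test shows that a bound of the stated uniform-in-$r$ form cannot hold as $r\to 0$ in any case. Your Step~2 is fine and coincides with the paper's: the two-sided comparison through $K(r)$ and $k(r)$, the substitution $t=sC_{1,\alpha}/K(r)$, and the factor-$2$ bookkeeping coming from $C_{1,\alpha}\int_{B\times B}\frac{(f(x)-f(y))^2}{|x-y|^{1+\alpha}}\,dx\,dy\le 2\,\EE(f,f)$ are all handled correctly. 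So the measure-comparison half of your argument is sound; the genuine gap is the key uniform local Nash/super Poincar\'e input, which cannot be the pure Nash inequality (without $L^1$-correction) that you assume.
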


\begin{proof} The second inequality is trivial, and so we only need
the consider the first inequality. The Sobolev inequality of dimension
$2/\alpha$ for fractional Laplacians in $\R$ holds uniformly on
balls, e.g.\ see \cite[Theorem 3.1]{CK2}.
 Thus according to  \cite[Corollary 3.3.4]{WBook},
there exists a constant $c_1>0$ such that
\begin{equation*}
\int_{B(0,r)} f^2(x)\,dx
\le s\int_{B(0,r)\times B(0,r)}
\frac{(f(y)-f(x))^2}{|y-x|^{d+\alpha}}\,d y\,d x +
c_1\big(1+s^{-1/\alpha}\big)  \bigg(\int_{B(0,r)} |f(x)|\,dx\bigg)^2
\end{equation*}
 holds for all $f\in C_c^\infty(\R)$ and all $s,r>0.$
Therefore,
\begin{eqnarray*}
\int_{B(0,r)} f^2(x)\,\mu(dx)
 &\le & K(r)\int_{B(0,r)} f^2(x)\,dx\\
& \le& sK(r)\int_{B(0,r)\times B(0,r)}
\frac{(f(y)-f(x))^2}{|y-x|^{d+\alpha}}\,d y\,d x\\
 & &
+ c_1\big(1+s^{-1/\alpha}\big)K(r)k(r)^{-2}  \bigg(\int_{B(0,r)} |f(x)|\,\mu(dx)\bigg)^2 .
\end{eqnarray*}
This implies the first desired assertion  by replacing $s$ with $C_{1,\alpha}sK(r)^{-1}$ in the above inequality.
\end{proof}

\begin{proof}[{\bf Proof of Theorem \ref{thm1}(ii)}]
 As in the proof of Theorem \ref{thm1}(i), it suffices to consider $f\in C_c^\infty(\R)$. First, by Lemma \ref{lemma3}, there exists a constant $C_7>0$ that
depends on $C_6$, $C_{1, \alpha}$ and $K(0+)$
such that
$$\mu(f^2\I_{B(0,r)})\le s\EE (f,f)+C_7 K_0(r) (1+s^{-1/\alpha})\mu(|f|)^2,\quad r,s>0, f\in C_c^\infty(\R)
$$
where
$$ K_0 (r) =  \frac{  K(r)^{1+1/\alpha}}{k(r)^2}.
$$
Combining this with \eqref{proof1} and noting that $\wh \EE(f,f)\le \EE(f,f)$, there exists a
constant $c_0>0$ such that for any $r\ge r_0$, $s>0$ and  $f\in C_c^\infty(\R)$,
$$
\mu(f^2)\le\left(\frac{c_0}{\Phi(r)}
+s\Big(1+\frac{c_0}{\Phi(r)}\Big)\right)\EE(f,f)+
 \left(1+\frac{c_0}{\Phi(r)}\right) K_0(r) (1+s^{-1/\alpha}) \mu(|f|)^2 .
$$
By \eqref{con2}, letting $s_0={c_0}/{\Phi(r_0)}$ and taking
$r=\Phi^{-1}(c_0/s)$, we have for $s\in (0,s_0]$ and
$f\in C^\infty_c(\R)$,
$$
\mu(f^2)\le (2s+s^2) \EE(f,f)+ (1+s )(1+s^{-1/\alpha}) K_0 \circ \Phi^{-1}(c_0/s) \mu(|f|)^2 .
$$
In particular, for  $s\in (0,s_0\wedge1]$ and
$f\in C^\infty_c(\R)$,
$$
\mu(f^2)\le 3s \EE(f,f)+2(1+s^{-1/\alpha})K_0 \circ \Phi^{-1}(c_0/s)\mu(|f|)^2.
$$
Replacing $s$ by $s/3$, we get for $s\in (0,3(s_0\wedge1)]$ and
$f\in C^\infty_c(\R)$,
$$
\mu(f^2)\le s \EE(f,f)+2 \cdot 3^{1/\alpha}(1+s^{-1/\alpha})
K_0 \circ \Phi^{-1}(3c_0/s)
\mu(|f|)^2.
$$
This proves the super Poincar\'{e} inequality \eqref{thm2-1}
with $\beta (r)=  2 \cdot 3^{1/\alpha}(1+r^{-1/\alpha}) K_0 \circ \Phi^{-1}(3c_0/r) $
for $r\in (0,3(s_0\wedge1))$
and $\beta (r)= \beta(3(s_0\wedge1))$ when $r\geq 3(s_0\wedge1)$. Combining both estimates for $\beta$ yields the desired assertion \eqref{e:1.9}.

Define
$ \Psi(t)=\int_t^\infty \frac{\beta^{-1}(r)}{r}dr$
for $t>\inf \beta (r)$.
Thus by \cite[Theorem 3.3.14]{WBook},
$$\|P_t\|_{L^1(\R;\mu)\to L^\infty(\R;\mu)}\le
2\Psi^{-1}(t)  \quad  \hbox{for } t>0,
$$
where
$$\Psi^{-1}(t)=\inf \{r\ge \inf \beta:
\Psi(r)\le t\}.
$$
In particular, we conclude that $P_{t}$ has a bounded
kernel $p(t, x, y)$ with respect to $\mu$ for every $t>0$.
Moreover, since every point is visited by the time-changed process
$Y$ and thus having positive capacity,
we have by \cite[Theorem 3.1]{BBCK} that the symmetric kernel $p(t, x, y)$
can be chosen in such a way that for every fixed $y\in \R$ and $t>0$,
$x\to p(t, x, y)$ is bounded and quasi-continuous on $\R$.
Thus for every $f\in L^2(\R; \mu)$ and $t>1$, we have by the Cauchy-Schwarz inequality and
\eqref{e:1.6} that
\begin{eqnarray*}
\sup_{x\in \R} | P_t f(x)-\mu (f)|
&=&  \sup_{x\in \R} | P_{1} (P_{t-1} f-\mu (f))(x)|
\\
&\le &\sup_{x\in \R} p_{2} (x, x)^{1/2} \,  \| P_{t-1} f -\mu (f)\|_{L^2(\R; \mu)}
 \\
&\leq & \ c e^{-t/C} \| f\|_{L^2(\R; \mu)}.
\end{eqnarray*}
This establishes \eqref{e:1.10}. The last assertion is a direct consequence of \eqref{e:1.10}, see, e.g.,\ \cite[Theorem 8.5]{Chen}.
\end{proof}

Next, we  prove Theorem \ref{thm1}(iii).

\begin{proof}[{\bf Proof of Theorem \ref{thm1}(iii)}] Let $a_0(x)=(1+|x|)^\alpha$ and $\mu_0(dx)=\frac{a_0(x)^{-1}\,dx}
{\int a_0(x)^{-1}\,dx}$. According to Theorem \ref{thm1}(i), the
following Poincar\'{e} inequality $$ \mu_0(f^2)\le
C\EE(f,f),\quad f\in \FF^\mu, \mu_0(f)=0$$ holds for
some constant $C>0$. Therefore, for any $s>0$ and $f\in
\FF^\mu$,
\begin{equation*}\begin{split}
&\int_{B(0,s)}\left(f(x)-\frac{1}{\mu(B(0,s))}\int_{B(0,s)}f(x)\,\mu(dx)\right)^2\,\mu(dx)\\
&=\inf_{a\in\R}\int_{B(0,s)}(f(x)-a)^2\,\mu(dx)\\
&\le \int_{B(0,s)}(f(x)-\mu_0(f))^2\,\mu(dx)\\
&\le c_1\left(\sup_{|x|\le
s}\frac{a(x)^{-1}}{a_0(x)^{-1}}\right)\int_{B(0,s)}(f(x)-\mu_0(f))^2\,\mu_0(dx)\\
&\le \frac{c_2}{\Phi_0(s)} \EE(f,f).
\end{split}\end{equation*}
The desired weak Poincar\'{e} inequality
\eqref{thm3-1} now follows from \eqref{con3} and \cite[Theorem 4.3.1]{WBook}.

By \cite[Theorem 4.1.3]{WBook} and its proof,
\eqref{thm3-1} implies that
$$ \| P_t f -\mu (f) \|_{L^2(\R; \mu)}^2 \leq
\eta (t) \left( \| f-\mu (f)\|_{L^2(\R; \mu)}^2+
\| f -\mu (f)\|_\infty^2\right)
\leq  2 \eta (t) \left( \| f \|_{L^2(\R; \mu)}^2+
\| f  \|_\infty^2\right)
$$
for every $ f\in \FF^\mu$, where
$$ \eta (t) =  \inf\{r>0: -  \alpha (r) \log r \leq 2 t\}.
$$
This completes the proof of Theorem \ref{thm1}(iii).
\end{proof}

\medskip

To prove Examples \ref{exa1} and \ref{exa2}, for $n\geq 1$, let
$g_n\in C^\infty(\R)$ be such that
\begin{equation}\label{e:3.10}
g_n(x)\begin{cases}=0, & |x|\le n;\\
  \in[0,1], & n\le |x|\le 2n;\\
  =1,&|x|\ge 2n,\end{cases}
\end{equation}
and $|g_n'(x)|\le 2/n$ for all $x\in\R$.
By \eqref{e:1.2} and \eqref{e:1.3}, $g_n\in \FF_e \cap L^2 (\R; \mu)=\FF^\mu$.
 Moreover, there exists a
constant $c>0$ independent of $n$ such that for all $n\ge1$,
\begin{align}\label{ref}
\EE(g_n,g_n)&=\frac{C_{1,\alpha}}{2} \int_{\{|x|\le 3n\}}
\int_{\{|y|\le 4n\}}\frac{(g_n(x)-g_n(y))^2}{|x-y|^{1+\alpha}}\,dy\,dx \nonumber\\
& \quad+ \frac{C_{1,\alpha}}{2} \int_{\{|x|\le 3n\}}\int_{\{|y|> 4n\}}
\frac{(g_n(x)-g_n(y))^2}{|x-y|^{1+\alpha}}\,dy\,dx \nonumber \\
&\quad
+\frac{C_{1,\alpha}}{2} \int_{\{|x|\ge 3n\}}   \int_{\{|y|\leq 2n\}}
\frac{(g_n(x)-g_n(y))^2}{|x-y|^{1+\alpha}}\,dy\,dx  \nonumber \\
&\le \frac{2C_{1,\alpha}}{n^2}\int_{\{|x|\le 3n\}} \left( \int_{\{|x-y|\le 7n\}}\frac{|x-y|^2}{|x-y|^{1+\alpha}}\,dy\right) dx\nonumber \\
&\quad+ \frac{C_{1,\alpha}}{2}
\int_{\{|x|\le 3n\}} \left( \int_{\{|x-y|\ge  n\}}\frac{1}{|x-y|^{1+\alpha}}\,dy\right) dx \nonumber \\
&\quad +\frac{C_{1,\alpha}}{2} \int_{\{|y|\leq 2n\}} \left( \int_{\{|x-y|\geq n\}}
\frac{1}{|x-y|^{1+\alpha}}\,dx \right) dy  \nonumber \\
&\le \frac{2C_{1,\alpha}}{n^2} \int_{\{|x|\le 3n\}} \left( \int_{\{|z|\le 7n\}}|z|^{1-\alpha}\,dz\right) dx\nonumber \\
&\quad+ \frac{C_{1,\alpha}}{2} \int_{\{|x|\le 3n\}}
\left( \int_{\{|z|\ge n\}} \frac{1}{|z|^{1+\alpha}}\,dz\right) dx \nonumber\\
&\quad+\frac{C_{1,\alpha}}{2} \int_{\{|y|\leq 2n\}} \left( \int_{\{|z|\geq n\}}
\frac1{|z|^{1+\alpha}}\,dz\right) dy  \nonumber \\
&= c n^{1-\alpha}.
\end{align}

\medskip

\begin{proof}[{\bf Proof of Example \ref{exa1}}] (i) If $\gamma\ge \alpha$, we have $\Phi(0)=C_\gamma$.
So the Poincar\'{e} inequality \eqref{thm1-1} follows from Theorem
\ref{thm1}(i).

To disprove the Poincar\'{e} inequality \eqref{thm1-1} for
$\gamma\in (1,\alpha)$, let
$g_n$ be the function defined by \eqref{e:3.10}. It is clear that
there are constants $c_1, c_2>0$ so that
$$
\mu(g_n^2)\ge \frac{c_1}{n^{\gamma-1}} \quad \hbox{and} \quad
\mu(g_n)^2\le \frac{c_2}{n^{2(\gamma-1)}} \qquad \hbox{for } n\geq 1.
$$
 Combining these with \eqref{ref}, we have
$$
\lim_{n\to\infty}
\frac{\EE(g_n,g_n)}{\mu(g_n^2)-\mu(g_n)^2}\le \lim_{n\to\infty}
\frac{cn^{-\alpha+1}}{c_1n^{-\gamma+1}-c_2n^{-2\gamma+2}}=0.
$$
Therefore, for any constant $C>0$, the Poincar\'{e} inequality
\eqref{thm1-1} does not hold.

(ii) Let $\gamma>\alpha$. It is easy to see that $\Phi(r)=C_\gamma(1+r)^{\gamma-\alpha}$,
$$K(r)=C_\gamma^{-1}, \quad  k(r)=C_\gamma^{-1}(1+r)^{-\gamma} \quad \textrm{ and } \quad
K_0 (r)= C_\gamma^{1-1/\gamma}(1+r)^{2\gamma}.
$$
Then, $ \Phi^{-1}(r)= (r/C_\gamma)^{1/(\gamma-\alpha)}-1$ for $r>1$ and so
$$\beta (r)\leq c (1+r^{-(1/\alpha + 2\gamma/(\alpha-\gamma)})
\quad \hbox{for } r>0.
$$
Observe that $\beta^{-1}(r)/r$ is integrable near infinity. Thus \eqref{e:1.10} holds.
Moreover, by \cite[Theorem 3.3.15(2)]{WBook}, the corresponding bound \eqref{e:1.15}
on $\| P_t\|_{L^1(\R; \mu)\to L^\infty (\R; \mu)}$ holds.

We next show that when $\gamma\in(1,\alpha]$,  the super Poincar\'{e} inequality \eqref{thm2-1} fails. Indeed, if the inequality \eqref{thm2-1} holds
for some non-increasing positive function $\beta (r)$,
then, applying it to the function $g_n$ defined in \eqref{e:3.10}, we get
$$\frac{c_3}{n^{\gamma-1}}\le \frac{cr}{n^{\alpha-1}}+ \frac{c_4\beta(r)}{n^{2(\gamma-1)}},\quad r>0, n\ge1
$$
 for some constants $c, c_3, c_4>0$. Since $\gamma\in(1,\alpha]$,
 one has
$$
c_3\le \lim_{n\to\infty} \Big(\frac{cr}{n^{\alpha-\gamma}}+ \frac{c_4\beta(r)}{n^{\gamma-1}}\Big)\le cr \quad \hbox{for every } r>0,
$$
which is impossible.

(iii) Let $\gamma\in(1,\alpha)$. Then
$\Phi_0(r)=C_\gamma(1+r)^{\gamma-\alpha}$, and so the desired weak
Poincar\'{e} inequality follows from Theorem \ref{thm1}(iii).
According to \cite[Theorem 4.1.5(2)]{WBook}, we have the claimed
bound for $\|P_t-\mu\|_{L^\infty(\R;\mu)\to L^2(\R;\mu)}$. On the
other hand, for  function $g_n$ given by \eqref{e:3.10}, we have
$\|g_n\|_\infty=1$, $\mu(g_n^2)-\mu(g_n)^2\ge c_1n^{-(\gamma-1)}$
for some constant $c_1>0$. Hence, according to \eqref{thm3-1} and
\eqref{ref}, $$c_1n^{-(\gamma-1)}\le c\alpha(r)
n^{-(\alpha-1)}+2r,\quad r>0.$$ Taking
$r=r_n:=\frac{c_1}{4n^{\gamma-1}}$ which goes to zero as $n\to \infty$,
we get that
$$\liminf_{n\to\infty}
r_n^{(\alpha-\gamma)/(\gamma-1)}\alpha(r_n)>0.$$ Thus, the
weak Poincar\'{e} inequality \eqref{thm3-1} does not hold if
$\lim_{r\to0}r^{(\alpha-\gamma)/(\gamma-1)}\alpha(r)=0.$
\end{proof}

\begin{proof}[{\bf Proof of Example \ref{exa2}}]
(i) If $\gamma\ge 0$, we have $\Phi(0)=C_{\alpha,\gamma}$ and so
the Poincar\'{e} inequality \eqref{thm1-1} follows from Theorem
\ref{thm1}(i).

To prove that the Poincar\'{e} inequality \eqref{thm1-1} does not hold for
$\gamma<0$, we take   function
$g_n$ defined by \eqref{e:3.10}. It is clear that $$\mu(g_n^2)\ge
\frac{c_1}{n^{\alpha-1}\log ^\gamma n},\quad \mu(g_n)^2\le
\frac{c_2}{n^{2(\alpha-1)}\log ^{2\gamma} n}$$ hold for $n$ large enough and some constants $c_1$,
$c_2>0$ (independent of $n$). Combining these with \eqref{ref}, we arrive at
$$\lim_{n\to\infty}
\frac{\EE(g_n,g_n)}{\mu(g_n^2)-\mu(g_n)^2}\le \lim_{n\to\infty}
\frac{cn^{-\alpha+1}}{c_1n^{-\alpha+1}\log^{-\gamma} n-c_2n^{-2\alpha+2}\log^{-2\gamma} n}=0$$ provided  $\gamma<0$.
This implies that the Poincar\'{e} inequality
\eqref{thm1-1} does not hold for any constant $C>0$.

(ii) Let $\gamma>0$. It is easy to see that $\Phi(r)=C_{\alpha,\gamma}\log^{\gamma}(e+r)$,
$$
K(r)=C_{\alpha,\gamma}^{-1}, \quad  k(r)=C_{\alpha,\gamma}^{-1}(1+r)^{-\alpha}\log^{-\gamma}(e+r) \quad \textrm{ and } \quad
K_0(r)= C_{\alpha,\gamma}^{1-1/\alpha}(1+r)^{2\alpha} \log^{2\gamma} (e+r).
$$
 Then $ \Phi^{-1}(r)=\exp\big((r/C_{\alpha,\gamma})^{1/\gamma}\big)-e$ and so the super Poincar\'e inequality \eqref{thm2-1} holds with
$$\exp\Big( c_1(1+r^{-1/\gamma})\Big) \leq
\beta(r)\le \exp\Big( c_2(1+r^{-1/\gamma})\Big),\quad r>0$$
for some constant $c_1, c_2>0$.
When $\gamma>1$, we get \eqref{e:1.16} from  \cite[Theorem 3.3.15(1)]{WBook}.
Moreover, $\beta^{-1}(r)/r$ is integrable at infinity if and only if $\gamma>1$.
In this case, the uniform strong ergodicity \eqref{e:1.10} holds.

Next, we prove that if $\gamma\le 0$, then for any $\beta:(0,\infty)\to(0,\infty)$ the super Poincar\'{e} inequality \eqref{thm2-1} does not hold. Indeed, if the inequality \eqref{thm2-1} holds, then, applying it to the function $g_n$ of \eqref{e:3.10},
$$\frac{c_3}{n^{\alpha-1}\log^\gamma n}\le \frac{cr}{n^{\alpha-1}}+ \frac{c_4\beta(r)}{n^{2(\alpha-1)}\log^{2\gamma}n},\quad r>0, n\ge1$$ holds for some constants $c, c_3, c_4>0$. Since $\gamma\le0$, we get that
$$c_3\le \lim_{n\to\infty} \Big(\frac{cr}{\log^{-\gamma}n}+ \frac{c_4\beta(r)}{n^{\alpha-1}
\log^{\gamma}n}\Big)\le cr, \quad r>0.$$ Letting $r\to0$ we get that $c_3\le 0$, which is impossible.

Consider the  function $g_n$ defined by  \eqref{e:3.10}. It is easy
to see that
$$\mu(g_n^2)\ge \frac{c_5}{n^{\alpha-1}\log^\gamma(e+n)},\quad \mu(g_n)^2\le \frac{c_6}{n^{2(\alpha-1)}\log^{2\gamma}(e+n)}$$ hold for all $n\ge1$ and some constants $c_5, c_6>0$. Combining these with \eqref{ref} and \eqref{thm2-1}, we have
$$\frac{c_5}{\log^\gamma(e+n)}\le cr+ \frac{c_6\beta(r)}{n^{\alpha-1}\log^{2\gamma}(e+n)},\quad r>0.$$Taking $r=r_n:=\frac{c_5}{2c\log^\gamma(e+n)}$, we derive
$$\beta(r_n)\ge \frac{c_5}{2c_6}n^{\alpha-1}\log^\gamma(e+n),\quad n\ge1.$$ Therefore, $$\liminf_{n\to\infty}r_n^{1/\gamma}\log\beta(r_n)\ge \alpha-1>0.$$

According to \cite[Corollary 3.3.4(1)]{WBook}, the super
Poincar\'{e} inequality with $\beta(r)=\exp(c(1+r^{-1}))$ for some
$c>0$ is equivalent to the following defective log-Sobolev
inequality
\begin{equation}\label{delogine}\mu(f^2\log f^2)\le C_1\EE(f,f)+C_2 \quad  \hbox{for } f\in \FF^\mu \hbox{ with } \mu(f^2)=1,
\end{equation} where $C_1$ and $C_2$ are two positive constants. Since the
symmetric Dirichlet form $(\EE, \FF^\mu)$ is conservative and irreducible,
it follows from \cite[Corollary 1.3]{W13} that
the defective log-Sobolev inequality \eqref{delogine} is also
equivalent to log-Sobolev  inequality \eqref{logine}. It is well known
that  the log-Sobolev inequality implies the entropy of the semigroup $P_t$
decays exponentially  \eqref{e:1.17},
 see, e.g., \cite[Corollary 1.1]{Ch}.

(iii) Let $\gamma<0$. Then there exist two constants $c_7$ and
$c_8>0$ such that for all $r>0$, $$\Phi_0(r)=
c_7\log^{\gamma}(1+r),\quad \mu(B(0,r)^c)=
c_8r^{-(\alpha-1)}\log^{-\gamma}(1+r).$$ Then the desired weak
Poincar\'{e} inequality follows from Theorem \ref{thm1}(iii), and so
the corresponding bound for $\|P_t-\mu\|_{L^\infty(\R;\mu)\to
L^2(\R;\mu)}$ follows from \cite[Theorem 4.1.5(1)]{WBook}. On the
other hand,   for function $g_n$ defined in \eqref{e:3.10},
we have $\|g_n\|_\infty=1$ and $$\mu(g_n^2)-\mu(g_n)^2\ge
c_9n^{-(\alpha-1)}\log^{-\gamma}(1+n)$$ for some constant
$c_9>0$. Hence, according to \eqref{thm3-1} and \eqref{ref},
$$c_9n^{-(\alpha-1)}\log^{-\gamma}(1+n)\le c\alpha(r)
n^{-(\alpha-1)}+2r,\quad r>0.$$   Taking
$r=r_n:=\frac{c_9n^{-(\alpha-1)}\log^{-\gamma}(1+n)}{4}$ which
goes to zero as $n\to \infty$, we get that
$$\liminf_{n\to\infty}
\log^\gamma(1+r_n^{-1})\alpha(r_n)=\alpha-1>0.$$ Thus, the
weak Poincar\'{e} inequality \eqref{thm3-1} fails if
$\lim_{r\to0}\log^\gamma(1+r^{-1})\alpha(r)=0.$
\end{proof}

We next prove Theorem \ref{thm2}.

\begin{proof}[{\bf Proof of Theorem \ref{thm2}}] We only need to consider $f\in C_c^\infty(\R)$.
First, according to Lemma \ref{lemma3}, there exists a constant $c_1>0$ such that
$$\mu(f^2\I_{B(0,r)})\le s \EE (f,f)+c_1K_0(r)(1+s^{-1}) \mu(|f|)^2,\quad r,s>0, f\in C_c^\infty(\R)
$$
On the other hand, for any $1< \delta< \beta$, let $V$ be the function defined in Lemma \ref{lemma2}.
Then, by Lemma \ref{lemma2} and \eqref{thm2-1-1}, we know that
$$
\LL^{(\delta)}_{>1}V(x)\le -c_2\Psi_\beta(|x|)(1+|x|)^{\beta-\delta}
V(x)\I_{B(0,r_0)^c}(x)+c_3\I_{B(0,r_0)}(x)
$$
for some positive constants $c_2$, $c_3$ and $r_0$ depending on $\delta$.
Combining this with the argument of Theorem \ref{thm1}(ii), and noting that
$$
\int_{\R} f(x) (-\LL^{(\delta)}_{>1}V(x)) \mu (dx) \leq
 \EE (f,f),
 $$
 we conclude that there exists a
constant $c_4>0$ such that for any $r\ge r_0$, $s>0$ and $f\in C_c^\infty(\R)$,
\begin{equation*}\begin{split}\mu(f^2)\le
&\left(\frac{c_4}{\Psi_\beta(r)(1+r)^{\beta-\delta}}+s \Big(1+\frac{c_4}{\Psi_\beta(r)(1+r)^{\beta-\delta}}\Big)\right)\EE(f,f)\\
&+
c_1K_0(r)(1+s^{-1}) \left(1+\frac{c_4}{\Psi_\beta(r)(1+r)^{\beta-\delta}}\right)\mu(|f|)^2\\
=:&\left(\frac{c_4}{\Psi_{\beta,\delta}(r)}+s\Big(1+\frac{c_4}{\Psi_{\beta,\delta}(r)}\Big)\right)\EE(f,f)+
c_1K_0(r)(1+s^{-1})
\left(1+\frac{c_4}{\Psi_{\beta,\delta}(r)}\right)\mu(|f|)^2.
\end{split}\end{equation*} Taking
$r=\Phi_{\beta,\delta}^{-1}(c_4/s)$, we get for $s\in (0,s_0)$ and
$f\in C^\infty_c(\R)$ that
$$
\mu(f^2)\le (2s+s^2) \EE(f,f)+c_1 (1+s) (1+s^{-1})
K_0\circ\left(\Phi_{\beta,\delta}^{-1}(c_4/s)\right) \mu(|f|)^2 ,
$$
 where
$s_0=\frac{c_4}{\Psi_{\beta,\delta}(r_0)}$.
Replacing $s$ by $s/3$, we have for $ s\in (0,3(s_0\wedge1))$ and
$f\in C^\infty_c(\R)$,
$$\mu(f^2)\le s \EE(f,f)+ c_2  (1+s^{-1})  K_0\circ \left(\Phi_{\beta,\delta}^{-1}(3c_4/s)\right) \mu(|f|)^2  .
$$
 The required assertion follows from the
inequality above and by taking $\beta(s)=\beta(3(s_0\wedge1))$ for
all $s\ge 3(s_0\wedge1)$.
\end{proof}

The proof of Example \ref{exa3} is similar to that of Example \ref{exa1}(ii),
we omit the details here.
We now present the proof of Theorem \ref{pro100}.

\begin{proof}[{\bf Proof of Theorem \ref{pro100}}] (i) Since the
coefficient $a$ in the SDE \eqref{sde1} is locally
$1/\alpha$-H\"older continuous, it follows from \cite[Theorem
1.1]{B} or \cite[Theorem 1]{Ku} that \eqref{sde1} has a unique
strong solution $(Z_t)_{t\ge0}$ up to the explosion time
$$\tau=\inf\{t>0: Z_t\notin \R\}.$$ According to Lemma \ref{lemma2} and \cite[Theorem 2.1(1)]{MT2},
 $\tau=\infty$. This is, the SDE \eqref{sde1} has a
unique strong solution.

Let $(P_t)_{t\ge0}$ be the semigroup of the process $(Z_t)_{t\ge0}$.
It follows from \eqref{e:1.13} that $Z$ is a time-change of the
symmetric $\alpha$-stable process, and  $\mu$ is its symmetrizing
and unique invariant probability measure (see Proposition
\ref{P:1.1}).

In view of \eqref{e:1.13}, \eqref{pro1000} and Theorem
\ref{thm1}(i),   the semigroup $(P_t)_{t\ge0}$ is $L^2(\R;\mu)$
exponentially ergodic, which is equivalent to the desired assertion
for the exponential ergodicity of $(P_t)_{t\ge0}$ in the total
variation norm, see, e.g., \cite[Theorem 8.8]{Chen}.

(ii) Since $\alpha\in(1,2)$,
symmetric $\alpha$-stable process $X$ on $\R$ is pointwise recurrent,
so is $Z$.
This in particular implies that
the process $Z$ is Lebesgue
irreducible, that is, for any $x\in\R$ and any Borel
set $B$ with positive Lebesuge measure, $\Pp_x(\sigma_B<\infty)>0$, where $\sigma_B=\inf\{t\ge0:X_t\in B\}$.
Let $B$ be any Borel set in $\R$ with $\mu (B)=0$.
Denote by $(P_t)_{t\geq 0}$ the transition semigroup of $Y$.
Then for each $t>0$, $\mu (P_t \I_B)= \mu (B)=0$ and so $P_t  \I_B =0$
$\mu$-a.e. on $\R$. Since $P_t \I_B$ is $\EE$-quasi-continuous, it follows
that $P_t \I_B (x)=0$ for every $x\in \R$. This shows that for every
$x\in \R$ and $t>0$, $P_t (x, dy)$ is absolutely continuous with respect to
$\mu$. We claim that if $\mu (B)>0$, then $P_t (x, B)>0$ for every $x\in \R$
and $t>0$. Suppose there are $x_0\in \R$ and $t_0>0$ so that $P_{t_0} (x_0, B)=0$.
Then $P_{t_0/2} (P_{t_0/2}\I_B)(x_0)=0$ and so $P_{t_0/2}\I_B=0$ $\mu$-a.e. on $\R$.
The latter would imply that $\mu (B)=\mu (P_{t_0/2}\I_B)=0$, which is absurd.
This proves the claim, which in particular implies that $Y$ is
aperiodic.

On the other hand, let $\varphi\in C^2(\R)$ be a nonnegative function such that $\varphi(x)=|x|$
for $|x|\ge1$, and $\varphi(x)\le |x|$ for $|x|\le 1$. Under \eqref{pro2000},
define the function $V(x)=2-(1+\varphi(x))^{-\theta}$ for some constant
$\theta\in (0,\alpha(\gamma-1))$ to be determined later. It is easy to see
that $\LL V$ exists pointwise as a locally bounded function. Furthermore,
according to the proof of \cite[Theorem 1.1(ii)]{Sa} (also see \cite[(3.24)-(3.28)]{Sa} for more details),
we know that $$\limsup_{|x|\to\infty}(1+|x|)^{\alpha+\theta}\LL V(x)= \frac{\theta C_{1,\alpha} }{\alpha}E(\alpha,-\theta),$$ where $E(\alpha,-\theta)$ is defined by \eqref{prooflemma2-2-3}. By \eqref{prooflemma2-2}, one can choose $\theta\in (0,\gamma-\alpha)$ small enough such that
$$\limsup_{|x|\to\infty}(1+|x|)^{\alpha+\theta}\LL V(x)\le
\frac{\theta C_{1,\alpha}\pi }{2\alpha} \cot (\pi\alpha/2).$$
Combining \eqref{e:1.13}, \eqref{pro2000} with all the conclusions
above, we get that there exist $c_1$, $c_2$ and $r_0>0$ such that
\begin{equation}\label{drift}\LL V\le -c_1
V+c_2\I_{B(0,r_0)}.\end{equation}

As mentioned above, the process $Z$ is Lebesgue irreducible and
aperiodic. Therefore, according to \eqref{drift} and \cite[Theorem
5.2(c)]{DMT},   the process $Z$ is uniformly strongly
ergodic.
\end{proof}

\section{Appendix: ergodicity of time changed Brownian motions in dimension one}
Let $B$ be a Brownian motion on $\R$, and $a$ be a positive and locally bounded
measurable function on $\R$ such that $\mu(dx):=\frac{1}{a(x)}dx$ is
a probability measure (that is, $\int_{\R} a(x)^{-1}dx=1$). For any
$t>0$, define $$A_t=\int_0^t\frac{1}{a(B_s)}ds,\quad \tau_t=\inf\{s>0: A_s>t\},$$
and set $Y_t=B_{\tau_t}$. Following Section \ref{section1}, we know
that the time changed Brownian motion $Y$ is a recurrent $\mu$-symmetric diffusion
process on $\R$, and $\mu$ is the unique invariant probability measure
of $Y$. The Dirichlet form $(\EE, \FF^{\mu})$ of $Y$ is
$$\EE(f,f)=\frac{1}{2}\int_{\R} f'(x)^2\,dx,$$ where $\FF^{\mu}$ is defined
by \eqref{e:1.3}. In this appendix, we   present results on various
ergodic properties of $Y$ in terms of Poincar\'{e} type
inequalities. For readers's convenience, we  also present the sketch of the proof below.

\begin{theorem} \label{thm4}
For any $r>0$, set
$$\Phi(r):=\inf_{|x|\ge r}\frac{a(x)}{(1+|x|)^2},\quad
\Phi_0(r):=\inf_{|x|\le r} \frac{a(x)}{(1+|x|)^2}$$ and
$$
K(r):=\sup_{|x|\leqslant r}a(x)^{-1},\quad k(r):=\inf_{|x|\leqslant
r}a(x)^{-1}, \quad \quad K_0(r):=
\frac{K(r)^{3/2}}{k(r)^2}.
$$
Then the following holds.

\begin{itemize}
\item[\rm (i)] The
Poincar\'{e} inequality \eqref{thm1-1} holds for some constant $C>0$
if and only if
\begin{equation}\label{thm4-1}\limsup_{|x|\to\infty}\frac{1}{|x|}\int_{\{|s|\ge |x|\}} \frac{ds}{a(s)}<\infty.\end{equation}
In particular, if $\lim_{r\to\infty}\Phi(r)>0$, then \eqref{thm4-1}
is satisfied.

\item[\rm (ii)] The super
Poincar\'{e} inequality \eqref{thm2-1} holds if and only if
\begin{equation}\label{thm4-2}\limsup_{|x|\to\infty}\frac{1}{|x|}\int_{\{|s|\ge |x|\}}
\frac{ds}{a(s)}=0.
\end{equation} Moveover, in this case \eqref{thm2-1} holds
with
\begin{equation}\label{thm4-3}
\beta(r)=C_1\left(1+r^{-1/2} K_0(  \Phi^{-1}(C_2/r))\right)
\end{equation}
with some constants $C_1$ and $C_2>0$.
In particular, if $ \lim_{r\to\infty}\Phi(r)=\infty,$ then
\eqref{thm4-2} is satisfied.

\item[\rm (iii)]If $
\lim_{r\to\infty}\Phi_0(r)=0,$ then the weak
Poincar\'{e} inequality $$\mu(f^2)\le \alpha(r) \EE
(f,f)+r\|f\|_\infty^2 \quad \hbox{for every } r>0, f\in \FF^\mu,
\mu(f)=0$$ holds with $$\alpha(r)=C\inf\Big\{\frac{1}{\Phi_0(s)}: \mu(B(0,s))\geq \frac{1}{1+r}\Big\}
$$
for some constant $C>0$ independent of $r$.
\end{itemize}
\end{theorem}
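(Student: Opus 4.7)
The plan is to follow the architecture of the proofs of Theorems \ref{thm1} and \ref{thm2}, using two simplifications specific to the diffusion setting: (a) the Dirichlet form is local on $\R$, so classical Muckenhoupt--Hardy theory on each half-line furnishes sharp iff criteria; (b) the generator $\LL = \tfrac12 a\,d^2/dx^2$ acts pointwise, so the Lyapunov computations are elementary. Throughout I would split $\R$ at the origin and treat $[0,\infty)$ and $(-\infty,0]$ symmetrically, invoking \cite{Chen} and \cite{WBook} for the one-dimensional Muckenhoupt criteria.

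For part (i), sufficiency is the classical Muckenhoupt--Hardy inequality: the weighted Hardy inequality
$$\int_0^\infty (f(x)-f(0))^2 a(x)^{-1}\,dx \le C_+ \int_0^\infty f'(x)^2\,dx$$
holds iff $B_+ := \sup_{r>0} r\int_r^\infty a(s)^{-1}\,ds < \infty$, with an analogous statement on $(-\infty,0]$. Combining the two halves with $\mu((f-\mu(f))^2) \le \mu((f-f(0))^2)$ produces \eqref{thm1-1}. Necessity follows by testing \eqref{thm1-1} against suitably smoothed cutoffs on each half-line, such as $f_r(x) = \mathrm{sgn}(x)(|x|\wedge r)$, for which $\EE(f_r,f_r)=r$ while the variance is bounded below by a constant multiple of $r^2\mu(B(0,r)^c)$; this forces the Muckenhoupt tail bound, which is precisely \eqref{thm4-1}. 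The ``in particular'' clause is immediate since $\Phi(r)\ge c>0$ for $r$ large gives $a(s)^{-1}\le c^{-1}(1+|s|)^{-2}$, hence $\int_{|s|\ge r} a(s)^{-1}\,ds = O(1/r)$.

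For part (ii), I would follow the scheme of Theorem \ref{thm1}(ii) to produce the explicit rate \eqref{thm4-3} under the pointwise condition $\lim_{r\to\infty}\Phi(r)=\infty$. Take the Lyapunov function $V(x) = 1+|x|^\theta$ with $\theta\in(0,1)$; a direct computation gives $\LL V(x) = \tfrac12 a(x)\theta(\theta-1)|x|^{\theta-2}$ for $x\ne 0$, so $-\LL V/V \ge c\,a(x)/(1+|x|)^2$ for $|x|$ large. The standard identity $\int f^2(-\LL V)/V\,d\mu \le \EE(f,f)$, which is a consequence of the elementary pointwise bound $((f^2/V)'V')(x) \le (f'(x))^2$ after integration by parts, yields the truncation estimate
$$\mu(f^2\I_{B(0,r)^c}) \le \frac{c_0}{\Phi(r)}\EE(f,f) + \frac{c_1}{\Phi(r)}\mu(f^2\I_{B(0,r_0)}),\qquad r\ge r_0.$$
Combining this with the one-dimensional local Nash inequality $\int_{B(0,r)} f^2\,dx \le s\int(f')^2\,dx + c(1+s^{-1/2})\bigl(\int_{B(0,r)}|f|\,dx\bigr)^2$ (the source of the exponent $1/2$ in \eqref{thm4-3}) and optimizing $r = \Phi^{-1}(c/s)$ produces \eqref{thm4-3}. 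The iff between \eqref{thm2-1} and the weaker averaged condition \eqref{thm4-2} I would obtain from the sharp 1D Muckenhoupt-type criterion for super Poincar\'e, which equates super Poincar\'e with the ``vanishing Muckenhoupt'' condition $r\mu([r,\infty))\to 0$ (and its mirror); the necessity direction uses the test functions $g_n$ of \eqref{e:3.10} as in Example \ref{exa1}(ii), with the jump-form energy replaced by $\EE(g_n,g_n)=\tfrac12\int(g_n')^2\,dx=O(1/n)$.

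Part (iii) transcribes the proof of Theorem \ref{thm1}(iii): set $a_0(x) = (1+|x|)^2$ and $\mu_0 = Z^{-1}a_0^{-1}\,dx$, apply part (i) to obtain a Poincar\'e inequality for $\mu_0$, then compare $\mu$ and $\mu_0$ on $B(0,s)$ via $\sup_{|x|\le s} a_0(x)/a(x) \le \Phi_0(s)^{-1}$ to get the ball-wise bound $\int_{B(0,s)}(f-\bar f)^2\,d\mu \le c\,\Phi_0(s)^{-1}\EE(f,f)$, and invoke \cite[Theorem 4.3.1]{WBook}. The main obstacle I anticipate is the iff direction in (ii): proving that the averaged tail condition \eqref{thm4-2} (which does not in general imply $\Phi(r)\to\infty$ pointwise, since $\Phi$ is a pointwise infimum while \eqref{thm4-2} is an integrated condition) is sufficient for super Poincar\'e requires a sharper one-dimensional Hardy-type argument than the Lyapunov scheme above, which I would carry out via the criterion recorded in \cite{Chen}.
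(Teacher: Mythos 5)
Your proposal follows essentially the same route as the paper's proof: part (i) and the iff in part (ii) are precisely the one-dimensional Hardy--Muckenhoupt criteria that the paper invokes from \cite[Table 5.1]{Chen}, the explicit rate \eqref{thm4-3} is obtained in the paper by exactly your combination of the local super Poincar\'e (Nash) inequality on balls with exponent $s^{-1/2}$ and a Lyapunov function $V=\varphi^\theta$ satisfying $a V''\le -c\,\frac{a(x)}{(1+|x|)^2}V+c'\I_{B(0,r_0)}$, and part (iii) is transcribed from Theorem \ref{thm1}(iii) with $a_0(x)=(1+|x|)^2$. The only cosmetic differences are that the paper uses a smooth $\varphi$ equal to $|x|$ only for $|x|\ge 1$ (your $V=1+|x|^\theta$ should likewise be smoothed near the origin so that the integration-by-parts identity $\int f^2(-\LL V)/V\,d\mu\le \EE(f,f)$ is legitimate), and it simply cites the 1D criteria rather than re-running the Muckenhoupt test-function argument (where, for asymmetric $a$, the cutoffs should be anchored at a median rather than at $0$).
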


\begin{proof}[{\bf Sketch of  proof for Theorem \ref{thm4}}] The assertion (i) immediately
follows from criterion for the Poincar\'{e} inequality of
one-dimensional diffusion processes, see, e.g.,\ \cite[Table 5.1, p.\
100]{Chen}. Having (i) at hand, one can follow the proof of Theorem
\ref{thm1}(iii) to obtain the assertion (iii).

The first statement of assertion (ii) is a direct consequence of
criterion for the super Poincar\'{e} inequality of one-dimensional
diffusion processes, also see \cite[Table 5.1, p.\ 100]{Chen}. To
verify \eqref{thm4-3}, we first note that the classical Sobolev
inequality for Laplacian holds uniformly on balls. Then, by
\cite[Corollary 3.3.4(2)]{WBook}, there exists a constant $c_1>0$
such that for all $f\in C_c^\infty(\R)$ and $r,s>0$,
$$\int_{B(0,r)}f(x)^2dx\le s
\int_{B(0,r)}f'(x)^2dx+c_1(1+s^{-1/2})\Big(\int_{B(0,r)}|f|(x)dx\Big)^2.$$
Therefore, we can conclude that there is  a constant $c_2>0$ such
that for all $f\in C_c^\infty(\R)$ and $r,s>0$,
$$\int_{B(0,r)}f(x)^2\mu(dx)\le s\int_{B(0,r)}f'(x)^2dx+c_2K_0(r) (1+s^{-1/2})\Big(\int_{B(0,r)}|f|(x)\mu(dx)\Big)^2.$$

Let $\varphi\in C^\infty(\R)$ such that $\varphi(x)=|x|$ for all $|x|\ge 1$. For $\theta\in (0,1)$, set $V(x)=\varphi(x)^\theta.$ Then, there are constants $c_3, c_4$ and $r_0>0$ such that
$$a(x) V''(x)\le -c_3\frac{a(x)}{(1+|x|)^2}V(x)+c_4\I_{B(0,r_0)}(x),\quad x\in \R.$$
This, together with the local super Poincar\'{e} inequality above, gives us the desired $\beta$ given by \eqref{thm4-3}.
\end{proof}

\medskip

\begin{acknowledgement}
The second author would like to thank Dr.\ Nikola Sandri\'{c} for
very helpful communications on \cite[Theorems 1.1 and 1.2]{Sa} and
their proofs.
\end{acknowledgement}

 \vskip 0.3truein

 {\bf Zhen-Qing Chen}

  Department of Mathematics, University of Washington, Seattle,
WA 98195, USA

  E-mail: \texttt{zqchen@uw.edu}

\bigskip

 {\bf Jian Wang}

 School of Mathematics and Computer Science, Fujian Normal
University, 350007, Fuzhou,

\quad P.R. China.

 E-mail:  \texttt{jianwang@fjnu.edu.cn}

\end{document}